\newcommand{\N}{\mathbb{N}}
\newcommand{\R}{\mathbb{R}}
\newcommand{\C}{\mathbb{C}}
\renewcommand{\H}{\mathbb{H}}
\newcommand{\bH}{\mathbb{H}}
\newcommand{\bN}{\mathbb{N}}
\newcommand{\bR}{\mathbb{R}}
\newcommand{\be}{\begin{equation}}
\newcommand{\ee}{\end{equation}}
\newcommand{\nH}{\nabla_\H }
\newcommand{\distr}{\mathcal{D}}			
\renewcommand{\be}{\begin{equation}}
\renewcommand{\ee}{\end{equation}}
\newcommand{\cK}{\mathcal{K}}
\newcommand{\kX}{\mathfrak{X}}
\newcommand{\cF}{\mathcal{F}}
\newcommand{\cS}{\mathcal{S}}
\DeclareMathOperator{\dive}{div}
\DeclareMathOperator{\Dom}{Dom}
\newcommand{\red}{\color{red}}
\DeclareMathOperator{\idty}{Id}
\DeclareMathOperator{\dom}{Dom}
\newtheorem{thm}{Theorem}[section]
\newtheorem{prop}[thm]{Proposition}    
\newtheorem{lem}[thm]{Lemma}         
\theoremstyle{definition}
\newtheorem{defn}[thm]{Definition}
\theoremstyle{remark}        
\newtheorem{rem}[thm]{Remark}
\newcommand{\pointH}{\mathring{H}}
\numberwithin{equation}{section}
\title[Point interactions for 3D sub-Laplacians]{Point interactions for 3D sub-Laplacians}
\date{\today}
\author[R.~Adami]{Riccardo Adami$^1$}
\address{$^1$Politecnico di Torino, Dipartimento di Scienze Matematiche ``G.L. Lagrange'', Corso Duca degli
Abruzzi, 24, 10129, Torino, Italy}
\email{riccardo.adami@polito.it}
\author[U.~Boscain]{Ugo Boscain$^2$}
\address{$^2$CNRS, Sorbonne Universit\'e, Inria, Universit\'e de Paris, Laboratoire Jacques-Louis
Lions, Paris, France}
\email{ugo.boscain@upmc.fr}
\author[V.~Franceschi]{Valentina Franceschi$^3$}
\address{$^3$Laboratoire Jacques-Louis
Lions, Sorbonne Universit\'e, Universit\'e de Paris, Inria, CNRS, Paris, France}
\email{franceschiv@ljll.math.upmc.fr}
\author[D.~Prandi]{Dario Prandi$^4$}
\address{$^4$CNRS, L2S, CentraleSup\'elec, Universit\'e Paris-Saclay, France}
\email{dario.prandi@centralesupelec.fr}
\begin{document}

\begin{abstract}
  In this paper we show that, for a sub-Laplacian $\Delta$ on a $3$-dimensional manifold $M$, no point interaction centered at a 
  point $q_0\in M$ exists. When $M$ is complete w.r.t.\ the associated sub-Riemannian structure, this means that $\Delta$ acting on $C^\infty_0(M\setminus\{q_0\})$ is essentially self-adjoint in $L^2(M)$. A particular example is  the standard sub-Laplacian on the Heisenberg group.
  This is in stark contrast with what happens in a Riemannian manifold $N$, whose associated Laplace-Beltrami operator acting on $C^\infty_0(N\setminus\{q_0\})$ is never essentially self-adjoint in $L^2(N)$, if $\dim N\le 3$.
  We then apply this result to the Schr\"odinger evolution of a thin molecule, i.e., with a vanishing  moment of inertia, rotating around its center of mass.
  
  \smallskip
  \smallskip
\noindent \textbf{Keywords.} Essential self-adjointness, Heisenberg group, sub-Laplacian,  point interactions, sub-Riemannian geometry, rotation of molecules.
\end{abstract}

\maketitle

\section{Introduction}

Let $(M,g)$ be a Riemannian manifold endowed with a smooth volume $\omega$ (one can think, e.g., of the Riemannian volume). The associated \emph{Laplace operator} is the operator on $L^2(M,\omega)$ acting on $C_0^\infty(M)$ and defined by $\Delta_\omega=\dive_\omega\circ\nabla$. Here, $C_0^\infty(M)$ is the space of compactly supported smooth functions on $M$, and $\dive_\omega$ denotes the divergence w.r.t.\ the measure $\omega$ and $\nabla$ is the Riemannian gradient.
A fundamental issue is the essential self-adjointness of $\Delta_\omega$, i.e., whether it admits a unique self-adjoint extension in $L^2(M, \omega)$. Indeed, the essential self-adjointness of $\Delta_\omega$ implies the well-posedeness in $L^2(M,\omega)$ of the Cauchy problems for the heat and Schr\"odinger equations, that read, respectively, 
\begin{equation}
  \begin{cases}
    \partial_t \phi = \Delta_\omega \phi,\\
    \phi|_{t=0} = \phi_0 \in L^2(M,\omega),
  \end{cases}
  \qquad
  \begin{cases}
    i\partial_t \psi = -\Delta_\omega \psi,\\
    \psi|_{t=0} = \psi_0 \in L^2(M,\omega).
  \end{cases}
\end{equation}
Roughly speaking, when $\Delta_\omega$ is not essentially self-adjoint, the above Cauchy problems are not well-defined without additional requirements, as for instance boundary conditions on $\partial M$.

The self-adjointness of $\Delta_\omega$ is related with geometric properties of $(M,g)$, as is evident from the  following classical result.
\begin{thm}\label{t-gaffney}
Let $(M,g)$ be a Riemannian manifold that is complete as metric space, and let $\omega$ be any smooth volume on $M$. Then, $\Delta_\omega$ is essentially self-adjoint in $L^2(M,\omega)$.
\end{thm}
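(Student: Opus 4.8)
The plan is to run the classical Gaffney cut-off argument. First note that $\Delta_\omega$ on $C_0^\infty(M)$ is symmetric and non-positive: integrating by parts against compactly supported test functions gives $\langle \Delta_\omega \varphi,\varphi\rangle_{L^2(M,\omega)} = -\int_M |\nabla\varphi|^2\,\omega\le 0$ for every $\varphi\in C_0^\infty(M)$. For a non-positive symmetric operator, essential self-adjointness is equivalent to the triviality of the deficiency space at a single point $\lambda>0$, i.e.\ to the implication
\[
  u\in L^2(M,\omega),\qquad \Delta_\omega u=\lambda u \ \text{ in }\ \mathcal D'(M)\quad\Longrightarrow\quad u=0 .
\]
So I would fix $\lambda>0$ and such a $u$. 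Since $\Delta_\omega$ is a second order elliptic operator with smooth coefficients ($g$ and $\omega$ being smooth), elliptic regularity gives $u\in C^\infty(M)$, and in particular $u\in H^1_\loc(M)$, which makes the computations below legitimate.

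Next I would construct good cut-offs using completeness. Fix $o\in M$ and set $\rho:=\dist(o,\cdot)$, which is $1$-Lipschitz; by Hopf--Rinow completeness ensures that closed metric balls are compact. Pick $\phi\in C^\infty(\bR)$ with $\phi\equiv 1$ on $(-\infty,1]$, $\phi\equiv 0$ on $[2,\infty)$, and $|\phi'|\le 2$, and define $\chi_n:=\phi(\rho/n)$. Then $\chi_n$ is Lipschitz and compactly supported, $0\le\chi_n\le 1$, $\chi_n\uparrow 1$ pointwise, and $|\nabla\chi_n|\le 2/n$ almost everywhere. (If one insists on smooth cut-offs one may mollify $\rho$ first, but the Lipschitz regularity already suffices to integrate by parts against smooth functions.)

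The core is the energy estimate. Testing $\Delta_\omega u=\lambda u$ against the compactly supported $H^1$ function $\chi_n^2 u$ and integrating by parts,
\[
  \lambda\int_M \chi_n^2 u^2\,\omega + \int_M \chi_n^2|\nabla u|^2\,\omega = -2\int_M \chi_n\,u\,\langle\nabla\chi_n,\nabla u\rangle\,\omega .
\]
Bounding the right-hand side by Cauchy--Schwarz and Young's inequality with $\tfrac12\int_M\chi_n^2|\nabla u|^2\,\omega + 2\int_M|\nabla\chi_n|^2 u^2\,\omega$, and absorbing the gradient term, yields
\[
  \lambda\int_M \chi_n^2 u^2\,\omega \le 2\int_M |\nabla\chi_n|^2 u^2\,\omega \le \frac{8}{n^2}\,\|u\|_{L^2(M,\omega)}^2 \xrightarrow[n\to\infty]{} 0 .
\]
Since $\chi_n^2 u^2\uparrow u^2$, monotone convergence gives $\lambda\|u\|_{L^2(M,\omega)}^2=0$, hence $u=0$ because $\lambda>0$. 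This proves that the deficiency spaces are trivial, so $\Delta_\omega$ is essentially self-adjoint.

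As for the main obstacle: the analytic ingredients (symmetry, ellipticity, the absorption argument) are entirely routine; the one step that genuinely uses the hypothesis is the existence of exhausting cut-offs whose gradients tend to zero in $L^\infty$, which rests precisely on completeness through the compactness of closed balls and on the Lipschitz — yet a priori non-smooth — nature of the distance function. That is the only technical point deserving care.
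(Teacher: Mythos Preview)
Your argument is the classical Gaffney cut-off proof and is correct. The paper does not give its own proof of this theorem: it simply attributes the result to Gaffney \cite{Gaffney1954} (for the Riemannian volume) and to Strichartz \cite{Strichartz1983} (for a general smooth measure), and your write-up is precisely the argument that those references carry out.
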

This result is due to Gaffney \cite{Gaffney1954} when $\omega$ is the Riemannian volume. A simpler argument, which generalizes to arbitrary smooth measures, is given by Strichartz \cite{Strichartz1983}.

A simple way to obtain non-complete Riemannian manifolds from a given complete one $(M,g)$,  is by removing a point $q_0\in M$. Considering $\Delta_\omega$ on $M\setminus\{q_0\}$ yields the \emph{pointed Laplace operator} $\mathring{\Delta}_\omega$. We have the following.
\begin{thm}
\label{t-coltello}
Let $(M,g)$ be a Riemannian manifold that is complete as metric space, and let $\omega$ be any smooth volume on $M$. Let  ${\mathring\Delta_\omega}$ be the pointed Laplace operator at $q_0\in M$. Then
$\mathring\Delta_\omega$ is essentially self-adjoint  in $L^2(M,\omega)$ if and only if $n\geq4$.
\end{thm}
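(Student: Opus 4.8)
The plan is to analyze essential self-adjointness of $\mathring\Delta_\omega$ by a local computation near $q_0$, since away from $q_0$ the operator agrees with the complete manifold's Laplacian, which by Gaffney/Strichartz (Theorem~\ref{t-gaffney}) is essentially self-adjoint; thus the only possible source of deficiency is the removed point. More precisely, $\Delta_\omega$ on $C_0^\infty(M)$ is essentially self-adjoint by Theorem~\ref{t-gaffney}, and $\mathring\Delta_\omega$ on $C_0^\infty(M\setminus\{q_0\})$ is a restriction of it to a smaller domain, so the deficiency indices of $\mathring\Delta_\omega$ measure exactly how many "point interactions" can be supported at $q_0$. Since self-adjointness is insensitive to lower-order perturbations and to the choice of smooth volume $\omega$ (as $\omega = e^{f}\vol_g$ with $f$ smooth near $q_0$ gives a bounded perturbation after the natural unitary change), I may reduce to the model case: $M=\R^n$ with the flat metric and Lebesgue measure, $q_0=0$, and the operator $-\Delta$ acting on $C_0^\infty(\R^n\setminus\{0\})$.

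Next I would characterize the closure of $C_0^\infty(\R^n\setminus\{0\})$ in the graph norm of $-\Delta$, or equivalently decide whether radial solutions of $(-\Delta + 1)u = 0$ (or $\Delta u = \pm i u$, the genuine deficiency equation) that are singular at the origin can be $L^2$ near $0$. The key heuristic is that a function behaving like the fundamental solution of the Laplacian, i.e.\ $|x|^{2-n}$ for $n\ge 3$ and $\log|x|$ for $n=2$, is the candidate "extra" element. One checks that $|x|^{2-n} \in L^2(B_1)$ iff $2(2-n) > -n$, i.e.\ iff $n < 4$; and $\log|x| \in L^2(B_1)$ in all dimensions $n\ge 1$. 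So for $n \le 3$ the singular harmonic function is locally $L^2$, producing a nontrivial defect and hence non-essential-self-adjointness, while for $n\ge 4$ it is not. Rigorously, for $n\ge 4$ the statement follows from a Hardy-type inequality or a cutoff argument showing $C_0^\infty(\R^n\setminus\{0\})$ is dense in the domain of the unique self-adjoint extension, using that capacity of a point vanishes in the relevant sense; for $n\le 3$ one exhibits explicit deficiency elements: solve $-\Delta u = i u$ radially near $0$, pick the solution with the $|x|^{2-n}$-type singularity, cut it off smoothly away from $q_0$ and correct it by an element in the range of $\Delta_\omega + i$ on $C_0^\infty(M)$ to land in $\Dom((\mathring\Delta_\omega)^*)$ while staying outside $\Dom(\overline{\mathring\Delta_\omega})$.

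I expect the main obstacle to be the clean separation of the "point contribution" from the global behavior: one must ensure that the deficiency elements constructed locally near $q_0$ genuinely survive in $L^2(M,\omega)$ globally and are not already in the domain of the closure. The standard device is that if $\Delta_\omega$ (on all of $M$) is essentially self-adjoint, then $\Dom(\overline{\mathring\Delta_\omega})$ is a closed subspace of $\Dom(\overline{\Delta_\omega})$ of finite codimension, and one identifies this codimension with the dimension of the space of distributional solutions of $(\Delta_\omega \mp i)u = 0$ supported... — more carefully, with solutions of $(\Delta_\omega - \lambda)u=0$ near $q_0$ that lie in $L^2$ near $q_0$ modulo smooth solutions. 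Counting these reduces, via separation of variables in geodesic normal coordinates (the leading term being the Euclidean Laplacian), to the radial ODE analysis above, where the $n<4$ versus $n\ge 4$ dichotomy appears. The lower-order (curvature, volume density) terms must be controlled as perturbations that do not change the $L^2$-integrability threshold of the singular solution, which is the technical heart of the argument.
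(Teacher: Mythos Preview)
The paper does not give its own proof of Theorem~\ref{t-coltello}; it quotes it as a known result, attributing the Euclidean case to \cite[Ex.~4, p.~160]{RS2} and the general Riemannian case (with Riemannian volume) to \cite{ColindeVerdiere1982}, and remarking that the same arguments go through for an arbitrary smooth volume $\omega$. So there is nothing in the paper to compare your proposal against beyond those citations and the brief comment in Appendix~\ref{sec:hardy}.

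That said, your outline is correct and is essentially the standard route taken in those references. The Reed--Simon exercise is exactly the radial deficiency computation you describe: one checks whether the singular radial solution of $(-\Delta\pm i)u=0$ near the origin, which behaves like the fundamental solution $|x|^{2-n}$ (or $\log|x|$ for $n=2$), is locally square-integrable, and your threshold computation $|x|^{2-n}\in L^2(B_1)\iff n<4$ is the crux. Colin de Verdi\`ere's argument is precisely the reduction to this model via normal coordinates, treating the metric and volume corrections as lower-order perturbations that do not affect the $L^2$ threshold; you correctly flag this as the technical heart. For the direction $n\ge4$, the paper's Appendix~\ref{sec:hardy} explicitly mentions the Hardy-inequality/Agmon-estimate approach you propose, with constant $C_H=((n-2)/2)^2\ge1$, citing \cite{Nenciu2008, PRS, FPR}. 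So your plan matches what the paper invokes.

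One small caution: your phrase ``self-adjointness is insensitive to lower-order perturbations'' is too strong as a general principle (potentials can certainly create or destroy essential self-adjointness). What is true, and what you need, is that the \emph{deficiency indices} of the pointed operator depend only on the local behavior of the principal symbol near $q_0$, so that in geodesic normal coordinates the curvature and volume-density corrections are relatively bounded perturbations of the flat model and do not change the dimension of the deficiency spaces. This is what Colin de Verdi\`ere proves, and it is worth stating precisely rather than as a slogan.
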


The above result for the Euclidean space endowed with the Lebesgue measure is a consequence of \cite[Ex.~4, p.~160]{RS2}, while the case of Riemannian manifolds where $\omega$ is the Riemannian volume is treated in \cite{ColindeVerdiere1982}. {Similar arguments can be applied when $\omega$ is an arbitrary smooth volume.}

Theorem~\ref{t-coltello} is relevant in physics. Indeed, in non-relativistic quantum mechanics, self-adjoint extensions of the pointed Laplace operator can be used to construct potentials concentrated at a point, the so-called point interactions, as, e.g., 
\begin{equation}
\left\{\begin{array}{l}
 i\partial_t\psi=(-\Delta_\omega+\alpha\delta_{q_0}) \psi,~~~\alpha\in\R,\\
\psi(0,q)=\psi_0(q).
\end{array}\right.
  \end{equation}
 Here, $\delta_{q_0}$ is a Dirac-like  potential  representing a point interaction. Dirac $\delta$ and $\delta'$ are widely used in modelling of quantum systems, since Fermi's paper \cite{Fermi1936} up to contemporary applications \cite{Albeverio2000, Adami2001, Albeverio2005} . In this language, Theorem \ref{t-coltello} can be interpreted as the fact that point interactions do not occur in dimension 4 and higher or, equivalently, that single points are seen by Laplace operators only in dimension less or equal than 3.

In this paper we  study the essential self-adjointness of sub-Laplacians, i.e., the generalization of the Riemannian Laplace operators to sub-Riemannian manifolds. Let us briefly introduce this setting. We refer to \cite{ABB, Montgomery2002} for a more detailed treatement.

\subsection{Sub-Riemannian manifold}
A sub-Riemannian structure on a smooth manifold  $M$ is given by a family of smooth vector fields $\{X_1,\ldots,X_m\}\subset \operatorname{Vec}(M)$ satisfying the H\"ormander condition.  Namely, let $\distr=\operatorname{span}\{X_1,\ldots,X_m\}$, pose $\distr^1= \distr$ and recursively define $\distr^s = \distr^{s-1}+[\distr,\distr^{s-1}]$, $s\in \N$, $s\ge 2$. This defines the flag $\distr^1\subset\ldots\subset\distr^s\subset\ldots\subset \operatorname{Vec}(M) $. 
Letting $\distr^s_q = \{X(q)\mid X\in \distr^s\}$, $s\ge 1$, the H\"ormander condition then amounts to the requirement that for any $q\in M$ there exists $r=r(q)$ such that $\distr^{r}_q =T_qM$.
A sub-Riemannian manifold is then defined as the pair $(M,\{X_1,\ldots,X_m\})$. With abuse of notation, we will sometimes denote it by $M$.

On a sub-Riemannian manifold  the distance between two points $q_1,q_2\in M$ is defined by
\begin{multline}
d(q_1,q_2)=\inf\bigg\{ \int_0^1\sqrt{\sum_{i=1}^m u_i(t)^2}dt \: \bigg| \: \gamma:[0,1]\to M,\quad \dot \gamma(t)= \sum_{i=1}^m u_i(t)X_i(\gamma(t)),\\ \gamma(0)=q_0,~~\gamma(1)=q_1,~~ u_i\in L^1([0,1],\R),~~i=1,\ldots, m\bigg\}.
\end{multline}
Owing to the Rashevskii-Chow theorem \cite{ABB}, $(M,d)$ is a metric space inducing on $M$ its original topology.
The set of vector fields $\{X_1,\ldots,X_m\}$ is called a {\em generating frame} and it is a generalization of Riemannian orthonormal frames.
As for the latter, there are different choices of generating frames giving rise to the same metric space $(M,d)$, which is the true intrinsic object. 
For an equivalent definition of sub-Riemannian manifold that does not employ generating frames, see, e.g., \cite{ABB}. 

The above definition includes several geometric structures \cite{ABB}. Indeed, letting $k(q) = \dim(\distr_q)$, it holds that:
\begin{itemize}
\item If $k(\cdot)\equiv n$, one obtains a Riemannian structure.
\item If $k(\cdot)\equiv k<n$, one obtains a {\em classical sub-Riemannian structure}.  In this case, we will identify $\distr\subset \operatorname{Vec}(M)$ with the vector distribution $\bigsqcup_{q\in M} \distr_q\subset TM$. 
\item if $k(\cdot)$ is not constant, one obtains a so-called {\em rank-varying sub-Riemannian structure}. This includes what are usually called almost-Riemannian structures \cite{Agrachev2006, ABB}.
\end{itemize}
Motivated by the above observations, we say that a sub-Riemannian structure is \emph{genuine} if $k(q)<n$ for all $q\in M$.

\begin{rem}\label{r:orthonormal}
In the first two cases above, if $k(\cdot)\equiv m$ the family of (linearly independent) vector fields $\{X_1,\ldots, X_m\}$ is a \emph{global orthonormal frame} for the (sub-)Riemannian structure. Observe that, due to topological restrictions, such a frame does not always exist.
However, if $k(\cdot)$ is locally constant around $q_0\in M$, there always exists a \emph{local orthonormal frame}\footnote{That is, one can find an open neighborhood $U$ of $q_0$ and a family of linearly independent vector fields $\{Y_1,\ldots,Y_{k(q_0)}\}\subset \operatorname{Vec}(M)$, such that $\distr_q = \operatorname{span}\{Y_1(q),\ldots, Y_{k({q_0})}(q)\}$ for any $q\in U$, and that the sub-Riemannian distances defined by $\{Y_1,\ldots, Y_{k(q_0)}\}$ and $\{X_1,\ldots,X_m\}$ coincide on $U$.} around $q_0$. 
%
\end{rem}

In this paper a particular role is played by $3$-dimensional structures. 
\begin{defn}
  Consider a genuine sub-Riemannian structure on a $3$-dimensional manifold $M$. We say that $q\in M$ is a \emph{contact point} if $\distr^2_q = T_qM$. If every point of $M$ is contact, we say that the structure is a $3$-dimensional contact structure.
\end{defn}
In other words, in the genuine $3$-dimensional case, a contact point is a point in which the full tangent space is generated by the vector fields $X_1,\ldots,X_m$ and their first Lie brackets. Since $M$ is $3$-dimensional, contact points  coincide with what in the literature are called \emph{regular} points.

\subsection{Sub-Laplacians}
Let $\{X_1,\ldots,X_m\}$ be a generating frame for the sub-Rieman\-nian structure on $M$.
Given a smooth volume $\omega$ the associated sub-Laplacian acting on $C^\infty_0(M)$ is defined as $\Delta_\omega=\dive_\omega\circ\nabla$ where $\dive_\omega$ is computed with respect to the volume $\omega$ and $\nabla$ is the sub-Riemannian gradient, whose expression is
\begin{equation}
  \nabla\phi=\sum_{i=1}^mX_i(\phi) X_i,~~~\phi\in C^\infty(M).
\end{equation}
Such an operator is intrinsic in the sense that it does not depend on the particular choice of generating frame. We have then,
\begin{equation}\label{eq:sub-lapl}
\Delta_\omega =\sum_{i=1}^mX_i^2 +(\dive_\omega X_i) X_i.  
\end{equation}
Notice the presence of the ``sum of squares" of the vector fields of the generating frame plus some first order terms guaranteeing the symmetry of $\Delta_\omega$ w.r.t.\ the volume $\omega$. 

As a consequence of H\"ormander condition, $\Delta_\omega$ is hypoelliptic \cite{Hormander1967}, and we have the following generalization of Theorem~\ref{t-gaffney}.
\begin{thm}[Strichartz, \cite{Strichartz1986}]
  Let $M$ be a sub-Riemannian manifold that is complete as a metric space, and let $\omega$ be any smooth volume on $M$. Then, $\Delta_\omega$ is essentially self-adjoint on $L^2(M,\omega)$.
\end{thm}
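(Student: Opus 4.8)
The plan is to carry over the classical Gaffney--Strichartz cutoff argument to the sub-Riemannian setting; the only sub-Riemannian ingredients needed are that the distance function is horizontally $1$-Lipschitz and that completeness forces closed metric balls to be compact. Since $\Delta_\omega$ is essentially self-adjoint exactly when $-\Delta_\omega$ is, and $-\Delta_\omega$ is symmetric and nonnegative on $C^\infty_0(M)$ — integration by parts with respect to $\omega$ gives $\langle-\Delta_\omega\phi,\phi\rangle_{L^2(M,\omega)}=\int_M|\nabla\phi|^2\,d\omega\ge 0$, with $|\nabla\phi|^2=\sum_{i=1}^m(X_i\phi)^2$ — the standard deficiency-index criterion for a nonnegative symmetric operator reduces the claim to showing that every $u\in L^2(M,\omega)$ satisfying $\Delta_\omega u=u$ in the sense of distributions vanishes identically. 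So I would fix such a $u$ and prove $u=0$.

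Since $\Delta_\omega$ is hypoelliptic, $u$ is automatically smooth, hence in $W^{1,2}_{\loc}(M,\omega)$, so the computations below are legitimate. The crux is to construct, for each $R>0$, a Lipschitz cutoff $\chi_R\colon M\to[0,1]$ equal to $1$ on the metric ball $B(q_0,R)$, supported in $B(q_0,2R)$, and satisfying $|\nabla\chi_R|\le C/R$ with $C$ independent of $R$. This is where completeness enters, via the metric Hopf--Rinow theorem for sub-Riemannian manifolds: in a complete $(M,d)$ closed metric balls are compact, so such a $\chi_R$ is automatically compactly supported. Concretely one takes $\chi_R=f\bigl(d(q_0,\cdot)/R\bigr)$ with $f$ a fixed $1$-Lipschitz profile, $f\equiv1$ on $[0,1]$ and $f\equiv0$ on $[2,\infty)$, and uses that $q\mapsto d(q_0,q)$ is $1$-Lipschitz for $d$, hence horizontally $1$-Lipschitz, so that $|\nabla d(q_0,\cdot)|\le1$ almost everywhere; mollifying yields a smooth cutoff with the same gradient bound if one prefers.

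Testing the equation $\Delta_\omega u=u$ against $\chi_R^2\bar u$ and integrating by parts via $\Delta_\omega=\dive_\omega\circ\nabla$, together with the identity $\re\langle\nabla(\chi_R^2\bar u),\nabla u\rangle=\chi_R^2|\nabla u|^2+2\chi_R\re\bigl(\bar u\,\langle\nabla\chi_R,\nabla u\rangle\bigr)$, yields
\[
\int_M\chi_R^2|u|^2\,d\omega+\int_M\chi_R^2|\nabla u|^2\,d\omega=-2\re\int_M\chi_R\,\bar u\,\langle\nabla\chi_R,\nabla u\rangle\,d\omega.
\]
By Cauchy--Schwarz and $2ab\le a^2+b^2$ the right-hand side is at most $\int_M\chi_R^2|\nabla u|^2\,d\omega+\int_M|u|^2|\nabla\chi_R|^2\,d\omega$, and absorbing the gradient term gives
\[
\int_M\chi_R^2|u|^2\,d\omega\le\int_M|u|^2|\nabla\chi_R|^2\,d\omega\le\frac{C^2}{R^2}\,\|u\|_{L^2(M,\omega)}^2.
\]
Letting $R\to\infty$, the left-hand side increases to $\|u\|_{L^2(M,\omega)}^2$ by monotone convergence while the right-hand side tends to $0$, hence $u=0$.

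The only genuinely delicate step is the cutoff construction: it is there that completeness is used — through Hopf--Rinow, to guarantee compact support — and that one needs the sharp horizontal Lipschitz bound $|\nabla d(q_0,\cdot)|\le1$. The remaining energy estimate is verbatim the Riemannian one once $\sum_i(X_iu)^2$ plays the role of the Riemannian squared gradient, so no further idea is required.
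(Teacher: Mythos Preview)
The paper does not give its own proof of this statement: it is quoted as a known result of Strichartz \cite{Strichartz1986} and used as input. Your argument is correct and is essentially Strichartz's original cutoff method, carried out cleanly in the sub-Riemannian setting; the two ingredients you single out---Hopf--Rinow for complete sub-Riemannian manifolds (so closed balls are compact) and the horizontal $1$-Lipschitz bound $|\nabla d(q_0,\cdot)|\le 1$ a.e.---are exactly what is needed, and the energy identity and absorption step are routine.

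One small caveat: the remark that ``mollifying yields a smooth cutoff with the same gradient bound'' is not entirely innocent on a general sub-Riemannian manifold, since there is no group convolution to preserve the horizontal Lipschitz constant. This is harmless here: once hypoellipticity gives $u\in C^\infty(M)$, the Lipschitz cutoff $\chi_R$ suffices for the integration by parts (the product $\chi_R^2\bar u$ lies in $W^{1,2}$ with compact support), so no smoothing is actually required.
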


The main object of interest in this paper is the \emph{pointed sub-Laplacian} $\mathring\Delta_\omega$ at a point $q_0\in M$. Similarly to the Riemannian case, this is defined as the sub-Laplacian $\Delta_\omega$ on $M\setminus\{q_0\}$. 

\subsection{Main results}
One of the main features of sub-Riemannian manifolds, is the existence of several natural notions of dimension. Although for Riemannian manifolds these  are all coinciding, this is not the case in genuine sub-Riemannian manifolds. For instance, in the case of a classical sub-Riemannian manifold, some relevant dimensions are:
\begin{itemize}
\item the dimension of the space of admissible velocities $k$,
\item the topological dimension $n$,
\item the Hausdorff dimension $Q$ of the metric space $(M,d)$,
\end{itemize}
where $k<n<Q$, see \cite{Mitchell1985}\footnote{Notice that $Q = \sup_{q\in M} Q(q)$ where $Q(q)$ is the local Hausdorff dimension, which can be computed via the flag $\distr^1_q\subset\ldots\subset\distr^{k(q)}_q=T_qM$. In particular, $Q$ is possibly infinite.}. It is then a natural question to understand which of these dimensions are relevant for essential self-adjointness of the pointed sub-Laplacian.
In particular, since in a 3D contact sub-Riemannian manifold we have $k=2$, $n=3$, $Q=4$, in view of Theorem~\ref{t-coltello}, we focus on pointed sub-Laplacians at contact points of genuine 3D sub-Riemannian manifolds. For these structures we prove the following.
\begin{thm}
\label{t-principale}
Let $M$ be a genuine $3$-dimensional sub-Riemannian manifold that is complete as  metric space, and let $\omega$ be any smooth volume on $M$. Let $q_0\in M$ be a contact point, and $\mathring\Delta_\omega$ be the pointed sub-Laplacian at $q_0$. Then $\mathring\Delta_\omega$, with domain $C^\infty_0(M\setminus\{q_0\})$, is essentially self-adjoint in $L^2(M,\omega).$
\end{thm}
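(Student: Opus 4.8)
The plan is to derive essential self-adjointness of $\mathring\Delta_\omega$ from that of $\Delta_\omega$ acting on $C^\infty_0(M)$, which holds by Strichartz's theorem since $M$ is complete. Write $A$ for the unique self-adjoint extension of $\Delta_\omega|_{C^\infty_0(M)}$; since $C^\infty_0(M\setminus\{q_0\})\subset C^\infty_0(M)$ we get $\overline{\mathring\Delta_\omega}\subseteq A$, so it suffices to prove the reverse inclusion, for which it is enough to approximate every $\phi\in C^\infty_0(M)$, in the graph norm of $\Delta_\omega$, by functions $\phi_\epsilon\in C^\infty_0(M\setminus\{q_0\})$. I would take $\phi_\epsilon=(1-\eta_\epsilon)\phi$, where $\eta_\epsilon$ is a cutoff with $0\le\eta_\epsilon\le1$, equal to $1$ on a neighborhood of $q_0$ and supported in a ball $B_{r_\epsilon}(q_0)$ with $r_\epsilon\to0$ (so $\phi_\epsilon$ indeed lies in $C^\infty_0(M\setminus\{q_0\})$, vanishing near $q_0$). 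By the Leibniz rule $\Delta_\omega(\eta_\epsilon\phi)=\phi\,\Delta_\omega\eta_\epsilon+\eta_\epsilon\,\Delta_\omega\phi+2\sum_iX_i(\eta_\epsilon)X_i(\phi)$,
\[
\|\Delta_\omega(\phi-\phi_\epsilon)\|_{L^2}\le\|\phi\|_\infty\|\Delta_\omega\eta_\epsilon\|_{L^2}+\|\Delta_\omega\phi\|_{L^2(B_{r_\epsilon}(q_0))}+2\|\nabla\phi\|_\infty\|\nabla\eta_\epsilon\|_{L^2},
\]
and $\|\phi-\phi_\epsilon\|_{L^2}\le\|\phi\|_\infty\,\omega(B_{r_\epsilon}(q_0))^{1/2}$. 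The middle term and the $L^2$-term tend to $0$ automatically, so the whole theorem reduces to a capacitary statement: one must build cutoffs $\eta_\epsilon$ as above with $\|\nabla\eta_\epsilon\|_{L^2}\to0$ and $\|\Delta_\omega\eta_\epsilon\|_{L^2}\to0$ as $\epsilon\to0$.

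The construction uses the local geometry at the contact point, where the homogeneous dimension is $Q=4$. On a small punctured neighborhood of $q_0$ I would use two facts. First, the ball--box theorem gives the volume bound $\omega(B_s(q_0))\le C s^4$ for small $s$. Second, in privileged coordinates centered at $q_0$ the nilpotent approximation writes $\Delta_\omega$ as the Heisenberg sub-Laplacian $\widehat\Delta$ plus lower-order terms with respect to the anisotropic dilations; taking $\rho$ to be the homogeneous gauge $N$ of the Heisenberg group (smooth away from $0$ and comparable to $d(\cdot,q_0)$), one has $|\nabla_{\mathbb H}N|\le1$ and the explicit identity $\widehat\Delta N=3N^{-1}|\nabla_{\mathbb H}N|^2$, whence $|\widehat\Delta N|\le 3/N$; the lower-order perturbation applied to $N$ contributes only errors that are again $O(1/N)$, so on $M$ one still gets $|\nabla\rho|\le C$ and $|\Delta_\omega\rho|\le C/\rho$ near $q_0$.

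With this gauge in hand I would take a logarithmic cutoff. Set $L=\tfrac12\log(1/\epsilon)$ and $\eta_\epsilon=f_\epsilon(\rho)$, where $f_\epsilon\colon[0,\infty)\to[0,1]$ equals $1$ on $[0,\epsilon]$, equals $0$ on $[\sqrt\epsilon,\infty)$, and is a smoothing of the function interpolating linearly in the variable $\log\rho$ between these values, the corners being rounded over intervals of bounded length in $\log\rho$; then $|f_\epsilon'(s)|\le C/(Ls)$ and $|f_\epsilon''(s)|\le C/(Ls^2)$ throughout the transition annulus $\{\epsilon\le\rho\le\sqrt\epsilon\}$. By the previous step $|\nabla\eta_\epsilon|\le C|f_\epsilon'(\rho)|$ and $|\Delta_\omega\eta_\epsilon|\le C\big(|f_\epsilon''(\rho)|+\rho^{-1}|f_\epsilon'(\rho)|\big)$. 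On the dyadic shell where $\rho\asymp2^{-k}$ one has $|\Delta_\omega\eta_\epsilon|\lesssim L^{-1}2^{2k}$, $|\nabla\eta_\epsilon|\lesssim L^{-1}2^{k}$, and $\omega\lesssim2^{-4k}$, so that shell contributes $\lesssim L^{-2}$ to $\|\Delta_\omega\eta_\epsilon\|_{L^2}^2$ and $\lesssim L^{-2}2^{-2k}$ to $\|\nabla\eta_\epsilon\|_{L^2}^2$; summing over the $\asymp L$ shells that meet the annulus yields $\|\Delta_\omega\eta_\epsilon\|_{L^2}^2\lesssim L^{-1}\to0$ and $\|\nabla\eta_\epsilon\|_{L^2}^2\to0$. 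Here $Q=4$ enters decisively: the factor $2^{-4k}=2^{-Qk}$ from the volume exactly balances the $2^{4k}$ produced by two derivatives, leaving the borderline series $\sum_kL^{-2}\asymp L^{-1}$; for $Q<4$, e.g.\ at a point of a Riemannian $3$-manifold, this series diverges, in accordance with Theorem~\ref{t-coltello}.

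The main obstacle is the second step: extracting the Heisenberg normal form for $\Delta_\omega$ at the contact point via the nilpotent approximation and deducing the bounds $|\nabla\rho|\le C$, $|\Delta_\omega\rho|\le C/\rho$, together with $\omega(B_s(q_0))\le Cs^4$. Once these are in place the logarithmic-cutoff estimate of the third step is elementary. Note that completeness of $M$ is used only to invoke Strichartz's theorem, and that it is the homogeneous dimension $Q=4$, not the topological dimension $3$, that governs the result --- which is the structural reason for the contrast with the Riemannian case.
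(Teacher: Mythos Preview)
Your argument is correct and is a genuinely different, more elementary route than the one taken in the paper.

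The paper proceeds indirectly: it first proves the Heisenberg case (Theorem~\ref{thm:Heis}) via the non-commutative Fourier transform on $\H^1$, reducing the question to the absence of $L^2$ solutions of $(-\Delta_\H+i)\theta=\sum c_\alpha D^\alpha\delta_0$ and analyzing these through the Schr\"odinger representations and Hermite functions (Lemma~\ref{lem:Pavlov}, Proposition~\ref{prop:delta-fourier}). It then localizes this to show $H^2_0(U_\varepsilon)=H^2_0(U_\varepsilon\setminus\{0\})$ in $\H^1$, and finally transfers the result to a general contact point via a Darboux-type normal form (Proposition~\ref{prop:zito}) that makes the $H^2$ norms for $\Delta_\omega$ and $\Delta_\H$ locally equivalent. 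Your approach bypasses the representation theory entirely: you prove directly that $C^\infty_0(M)\subset\overline{C^\infty_0(M\setminus\{q_0\})}$ in graph norm by a logarithmic cutoff in the Koranyi gauge, using only the pointwise bounds $|\nabla N|\le C$, $|\Delta_\omega N|\le C/N$ (which follow from $\widehat\Delta N=3N^{-1}|\nabla_\H N|^2$ plus the fact that the remainder terms in the nilpotent approximation have nonnegative weight) and the volume growth $\omega(B_s)\lesssim s^4$. This is exactly the statement that a point has zero ``$\Delta$-capacity'' when the homogeneous dimension is $Q\ge 4$, and your dyadic bookkeeping makes transparent why $Q=4$ is the threshold.

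What each buys: your proof is shorter, avoids harmonic analysis on $\H^1$, and would extend verbatim to any equiregular point with local homogeneous dimension $\ge 4$ (in particular to higher-dimensional sub-Riemannian manifolds, which the paper explicitly leaves open). The paper's approach, on the other hand, gives structural information about what a hypothetical deficiency element would have to look like in Fourier space. It is worth noting that the paper's Appendix explains why the \emph{Hardy-inequality} route to essential self-adjointness fails here (the sharp constant is $<1$); your capacitary argument is a different mechanism and is not affected by that obstruction. The step you flag as the ``main obstacle'' --- the bounds on $\nabla\rho$, $\Delta_\omega\rho$ from the nilpotent approximation --- is standard for contact points (privileged coordinates plus weight counting), so your outline is essentially complete.
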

The above result follows from Theorem~\ref{thm:3d-delta}, and shows that, regarding the essential self-adjointness of pointed sub-Laplacians, 3D sub-Riemannian manifolds behave like Riemannian manifolds of dimension at least $4$. This suggests that the relevant dimension for self-adjointness is not the topological one, and that a more suitable candidate seems to be the Hausdorff dimension.

A crucial step in establishing Theorem \ref{t-principale} is the following corresponding result for the celebrated Heisenberg group $\H^1$.
\begin{thm}\label{thm:Heis}
  The operator $(\partial_x-\frac y2 \partial_z)^2+ (\partial_x+\frac x2 \partial_z)^2$ 
on $C^\infty_0(\R^3\setminus\{(0,0,0)\})$ is essentially self-adjoint in $L^2(\R^3,dx\,dy\,dz)$.
\end{thm}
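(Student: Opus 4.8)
The plan is to reduce the statement to a core property and then verify it with a logarithmic cutoff adapted to the dilations of the Heisenberg group. Write $\Delta=X_1^2+X_2^2$ with $X_1=\partial_x-\tfrac y2\partial_z$ and $X_2=\partial_y+\tfrac x2\partial_z$; since $\dive_{dx\,dy\,dz}X_1=\dive_{dx\,dy\,dz}X_2=0$, this is exactly the operator in the statement. Let $H_0$ be the closure of $\Delta$ acting on $C^\infty_0(\R^3)$. The Heisenberg group is complete as a metric space, so Strichartz's theorem quoted above gives that $H_0$ is self-adjoint and that $C^\infty_0(\R^3)$ is a core for it. Since $\mathring\Delta$ is the restriction of $\Delta|_{C^\infty_0(\R^3)}$ to the smaller domain $C^\infty_0(\R^3\setminus\{0\})$, we have $\overline{\mathring\Delta}\subseteq H_0$, and $\mathring\Delta$ will be essentially self-adjoint (with closure $H_0$) as soon as every $\phi\in C^\infty_0(\R^3)$ can be approximated, in the graph norm $\|u\|_{L^2}+\|\Delta u\|_{L^2}$, by elements of $C^\infty_0(\R^3\setminus\{0\})$.

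To produce such approximations I would cut off near the origin using the Korányi gauge $N(x,y,z)=\big((x^2+y^2)^2+16 z^2\big)^{1/4}$, which is smooth and strictly positive on $\R^3\setminus\{0\}$ and homogeneous of degree $1$ for the dilations $\delta_\lambda(x,y,z)=(\lambda x,\lambda y,\lambda^2 z)$. Fix $\theta\in C^\infty(\R)$ with $0\le\theta\le1$, $\theta\equiv 0$ on $(-\infty,0]$, $\theta\equiv1$ on $[1,\infty)$, and for small $\varepsilon>0$ set
\[
\chi_\varepsilon:=\theta\!\left(\frac{\log N-\log\varepsilon^2}{\log(1/\varepsilon)}\right),
\]
so that $\chi_\varepsilon\in C^\infty(\R^3)$, with $\chi_\varepsilon\equiv0$ on $\{N\le\varepsilon^2\}$ and $\chi_\varepsilon\equiv1$ on $\{N\ge\varepsilon\}$. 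Then $\chi_\varepsilon\phi\in C^\infty_0(\R^3\setminus\{0\})$, and the Leibniz rule for $\Delta=X_1^2+X_2^2$ yields
\[
\Delta(\chi_\varepsilon\phi)-\chi_\varepsilon\,\Delta\phi=\phi\,\Delta\chi_\varepsilon+2\big((X_1\chi_\varepsilon)(X_1\phi)+(X_2\chi_\varepsilon)(X_2\phi)\big).
\]
Since $0\le\chi_\varepsilon\le1$ and $\chi_\varepsilon\to1$ pointwise on $\R^3\setminus\{0\}$, dominated convergence gives $\chi_\varepsilon\phi\to\phi$ and $\chi_\varepsilon\,\Delta\phi\to\Delta\phi$ in $L^2$, so everything reduces to showing $\|\nH\chi_\varepsilon\|_{L^2}\to0$ and $\|\Delta\chi_\varepsilon\|_{L^2}\to0$ as $\varepsilon\to0$.

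These bounds follow from homogeneity alone. Writing $\chi_\varepsilon=F_\varepsilon(N)$, one has $|F_\varepsilon'(r)|\le C(r\log(1/\varepsilon))^{-1}$ and $|F_\varepsilon''(r)|\le C(r^2\log(1/\varepsilon))^{-1}$, both supported in $[\varepsilon^2,\varepsilon]$; moreover $|\nH N|^2=(X_1N)^2+(X_2N)^2$ is homogeneous of degree $0$ and hence bounded, $\Delta N$ is homogeneous of degree $-1$ so $|\Delta N|\le C/N$, and Lebesgue measure decomposes along the level sets of $N$ as $dx\,dy\,dz=r^{Q-1}\,dr\,d\nu$ with $Q=4$ the homogeneous dimension of $\H^1$. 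Using $\nH\chi_\varepsilon=F_\varepsilon'(N)\,\nH N$ and $\Delta\chi_\varepsilon=F_\varepsilon''(N)|\nH N|^2+F_\varepsilon'(N)\,\Delta N$, we get $|\nH\chi_\varepsilon|\le C(N\log(1/\varepsilon))^{-1}\mathbbm 1_{\{\varepsilon^2\le N\le\varepsilon\}}$ and $|\Delta\chi_\varepsilon|\le C(N^2\log(1/\varepsilon))^{-1}\mathbbm 1_{\{\varepsilon^2\le N\le\varepsilon\}}$, whence
\[
\|\nH\chi_\varepsilon\|_{L^2}^2\le\frac{C}{\log^2(1/\varepsilon)}\int_{\varepsilon^2}^{\varepsilon}r^{3}\,\frac{dr}{r^{2}}=O\!\Big(\tfrac{\varepsilon^{2}}{\log^2(1/\varepsilon)}\Big),\qquad
\|\Delta\chi_\varepsilon\|_{L^2}^2\le\frac{C}{\log^2(1/\varepsilon)}\int_{\varepsilon^2}^{\varepsilon}r^{3}\,\frac{dr}{r^{4}}=O\!\Big(\tfrac{1}{\log(1/\varepsilon)}\Big).
\]
Bounding the two error terms by $\|\phi\|_\infty\|\Delta\chi_\varepsilon\|_{L^2}$ and $\sum_i\|X_i\phi\|_\infty\|\nH\chi_\varepsilon\|_{L^2}$ then gives $\chi_\varepsilon\phi\to\phi$ in graph norm, which finishes the argument.

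The heart of the matter — and the reason essential self-adjointness holds for $\H^1$ but fails for a point in $\R^2$ or $\R^3$ — is the borderline computation $\int_{\varepsilon^2}^{\varepsilon}r^{-1}\,dr=\log(1/\varepsilon)$, which is exactly absorbed by the $\log^{-2}(1/\varepsilon)$ prefactor because the homogeneous dimension of $\H^1$ is $4$ (so that $N^{-4}$ against Lebesgue measure is logarithmically divergent at $0$, as $|x|^{-4}$ is in $\R^4$); for homogeneous dimension $\le 3$ the same integral diverges polynomially in $\varepsilon$, consistently with Theorem~\ref{t-coltello}. Everything else — the smoothness and homogeneity of the Korányi gauge, the boundedness of $|\nH N|$, the polar-type decomposition of Lebesgue measure, and the absence of any distributional contribution at $0$ in the Leibniz identity (automatic, since $\chi_\varepsilon$ vanishes near $0$) — is routine. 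An alternative route would be to take the partial Fourier transform in $z$ and prove, uniformly in the dual variable, essential self-adjointness of the resulting family of magnetic Schr\"odinger operators on $\R^2$ with the origin removed; the direct cutoff argument seems cleaner.
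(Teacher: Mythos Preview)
Your argument is correct and takes a genuinely different, more elementary route than the paper. The paper proceeds via Pavlov's lemma (Lemma~\ref{lem:Pavlov}): it characterizes the deficiency space of $\mathring\Delta_\H$ as those $\theta\in L^2$ solving $(-\Delta_\H+i)\theta=\sum_\alpha c_\alpha D^\alpha\delta_0$, and then uses the non-commutative Fourier transform of $\H^1$ (Schr\"odinger representations, Hermite functions) to show that any such $\theta$ fails to be square-integrable. Your proof bypasses representation theory entirely: it is a direct core argument, showing that $C^\infty_0(\R^3\setminus\{0\})$ is dense in $C^\infty_0(\R^3)$ for the graph norm via a logarithmic cutoff in the Kor\'anyi gauge. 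The computation hinges only on the homogeneous dimension $Q=4$, which makes $\int_{\varepsilon^2}^\varepsilon r^{Q-1}r^{-4}\,dr=\log(1/\varepsilon)$ absorbable by the $\log^{-2}(1/\varepsilon)$ prefactor; this is exactly the capacity-zero mechanism familiar from $\R^4$.

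What each approach buys: yours is shorter, uses nothing beyond smoothness and homogeneity of $N$, and immediately extends to any Carnot group of homogeneous dimension $\ge 4$. The paper's Fourier-analytic route is heavier but yields structural information about would-be deficiency elements and ties into the distributional machinery later used for localization. It is also worth noting that the paper's Appendix~\ref{sec:hardy} shows the Hardy constant for $\H^1$ is strictly below $1$, ruling out the Agmon-estimate strategy of \cite{Nenciu2008,PRS,FPR}; your cutoff argument is \emph{not} that strategy and is unaffected by the failure of the sharp Hardy bound, which makes it a nice complement to the paper's discussion.
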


When $q_0$ is not a contact point, or $M$ is of dimension larger than 3, we conjecture that Theorem~\ref{t-principale} still holds. However, our techniques are not easily extended to higher dimensions.
In dimension $2$, classical sub-Riemannian manifolds  do not exist, while for rank varying structures we have two cases.
Either the point $q_0$ is Riemannian and then we can conclude that the pointed Laplace operator is not essentially self-adjoint; or $q_0$ is not Riemannian and in this case we conjecture that the pointed Laplace operator is not essentially self-adjoint as well. However, the techniques necessary to study this case are very different from those developed in this paper and we do not treat this case here.

\subsection{Rotations of a thin molecule}

\begin{figure}
  \input{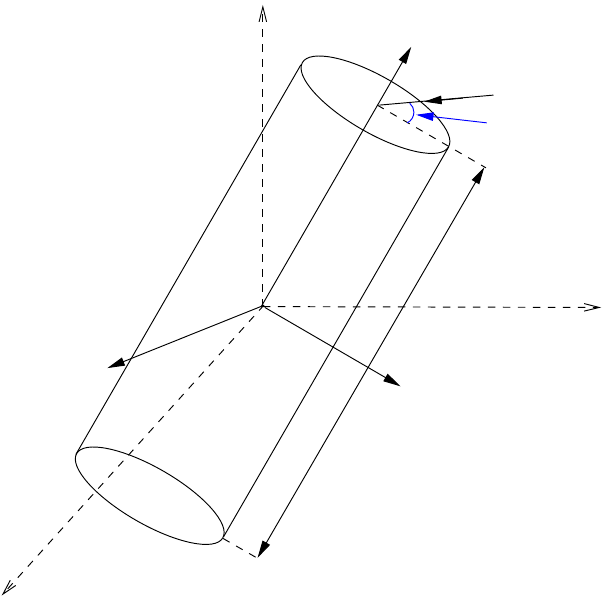_t}
  \caption{The thin molecule is obtained by considering the above rod and letting $r\to 0$. The thin degree of freedom is $\alpha$.}
  \label{fig:rod}
\end{figure}

We now apply Theorem~\ref{t-principale} to the Schr\"odinger evolution on $SO(3)$ of a thin molecule rotating around its center of mass, described as follows. Consider a rod-shaped molecule of mass $m>0$, radius $r>0$, and length $\ell>0$, as in Figure~\ref{fig:rod}. We denote by $z$ the principal axis of the rod, and by $x$ and $y$  two orthogonal ones. Then, the moments of inertia of the molecule are
\begin{equation}
  I_x=I_y=I:=m\frac{3r^2+\ell^2}{12}, \qquad I_z = m\frac{r^2}2.
\end{equation}
Letting $(\omega_x,\omega_y,\omega_z)$ be the angular velocity of the molecule and $L_x=I\omega_x$, $L_y=I\omega_y$, $L_z=I_z\omega_z$ be the corresponding angular momenta, the classical Hamiltonian is 
\begin{equation}
  H=\frac12 \left( I\omega_x^2+I\omega_y^2 +I_z\omega_z^2 \right) = \frac12 \left( \frac{L_x^2}{I}+\frac{L_y^2}{I}+\frac{L_z^2}{I_z} \right).
\end{equation}
Letting $r\to 0$, while keeping  $\ell$ and $m$ constant, we have that $I_z\to 0$, and the classical Hamiltonian reads
\begin{equation}
  H_{\text{thin}}= \frac1{2I} \left( {L_x^2}+{L_y^2} \right).
\end{equation}
The corresponding Schr\"odinger equation is 
\begin{equation}\label{sch-thin}
  i\hbar\frac{d\psi}{dt} = \hat{H}_{\text{thin}}\psi, 
  \qquad 
  \text{where}
  \qquad
  \hat{H}_{\text{thin}} = \frac1{2I} \left( {\hat L_x^2}+{\hat L_y^2} \right).
\end{equation}
Here,  $\hat{L}_x$, $\hat{L}_y$, (and $\hat{L}_z$) are the three angular momentum operators given by (in the following $\alpha,\beta,\gamma$ denote the Euler angles)
\begin{align}
\hat{L}_x=iF_x,~~F_x&=\cos\alpha \cot\beta \frac{\partial}{\partial \alpha}+\sin\alpha\frac{\partial}{\partial \beta}-\frac{\cos\alpha}{\sin\beta}\frac{\partial}{\partial \gamma},\\
\hat{L}_y=iF_y,~~F_y&=\sin\alpha \cot\beta \frac{\partial}{\partial \alpha}-\cos\alpha\frac{\partial}{\partial \beta}-\frac{\sin\alpha}{\sin\beta}\frac{\partial}{\partial \gamma},\\
\hat{L}_z=iF_z,~~F_z&=-\frac{\partial}{\partial \alpha}.
\end{align}
Since $[F_x,F_y]=F_z$ we have that $(SO(3),\{F_x,F_y\})$ is a contact sub-Riemannian manifold. Moreover, being  $SO(3)$ unimodular, we have that  $F_x,F_y,F_z$ are divergence-free with respect to the Haar measure $dh$ (see \cite{Agrachev2009}) and we have that the corresponding sub-Laplacian is 
$$
\Delta_{dh}=F_x^2+F_y^2.
$$
It follows that  $\hat{H}_{\text{thin}}=-\frac{1}{2I}\Delta_{dh}$.

When considering the Schr\"odinger equation \eqref{sch-thin} on functions of $(\alpha,\beta,\gamma)$, we are describing the evolution of a thin molecule in which the thin degree of freedom (i.e., the angle $\alpha$ of the rod w.r.t.\ the $z$ axis) is part of the configuration space.
The essential self-adjointness of the pointed sub-Laplacian $\mathring\Delta_{dh}$ on $SO(3)\setminus\{(\alpha_0,\beta_0,\gamma_0)\}$ given by Theorem~\ref{t-principale} can be interpreted in the following way:
{\em A point interaction centered at $(\alpha_0,\beta_0,\gamma_0)$ does not affect the evolution of a thin molecule}. 

Notice that this would not be the case if the molecule were not thin. Indeed in this case the quantum Hamiltonian would have been  proportional to a left-invariant Riemannian Laplacian on $SO(3)$, and by Theorem~\ref{t-coltello} the elimination of a point from the manifold crashes its essential self-adjointness.

Moreover, if the evolution of the thin molecule is considered on the 2D sphere instead than on $SO(3)$, meaning that we are totally forgetting the thin degree of freedom, then the elimination of a point would break the essential self-adjointness of  
the Laplacian as well.


\subsection{Structure of the paper and strategy of proof}
Sections~\ref{s:prelHeis} and \ref{sec:fa-prelim} are devoted to preliminaries on the Heisenberg group and some of the functional analytic properties of sub-Riemannian manifolds, respectively.
The remaining sections contain the proof of the main result of the paper, Theorem \ref{thm:3d-delta}. This is obtained by first establishing Theorem~\ref{thm:Heis} in Section~\ref{s:Heis}, which is then extended to $3$D genuine sub-Riemannian manifolds in Section~\ref{s:nilpot}.

More precisely, the proof of Theorem~\ref{thm:Heis} consists in first reducing the problem of essential self-adjointness to the absence of $L^2$ solutions of the equation $(\Delta_\omega+i)\theta = \varphi$, where $\varphi$ is a linear combination of derivatives of the Dirac delta mass at $0$, see Lemma~\ref{lem:Pavlov}. This criterion is then verified in Section~\ref{s:nc-fourier} by exploiting the non-commutative Fourier transform associated with the Heisenberg group structure.
Then, in Theorem~\ref{thm:heis-delta-local}, we localize the above result, showing that the self-adjoint extensions of the pointed sub-Laplacian at $0$ defined on a domain $\Omega\subset \H^1$ coincide with those of the (standard) sub-Laplacian on the same domain. 
The latter result is then generalized to any 3D genuine sub-Riemannian manifold via local normal forms, in Section~\ref{s:nilpot}.

Finally, in Appendix~\ref{sec:hardy}, we show how a criterion for essential self-adjointness based on an Hardy inequality with constant strictly bigger than $1$, exploited e.g.\ in  \cite{FPR, Nenciu2008, PRS}, fails for the Heisenberg group. This, in particular, raises a crucial criticism against the results contained in \cite{Yang2013}, and forces us to consider the above strategy of proof for Theorem \ref{thm:3d-delta}.

\section{The Heisenberg group $\H^1$}\label{s:prelHeis}

{\red
}


The Heisenberg group $\mathbb H^1$ is the nilpotent Lie group on $\bR^3$ associated with the non-commutative group law
\begin{equation}
\label{eq:oper}
(x,y,z)*(x',y',z')=\left(x+x',y+y',z+z'+\frac{1}{2}(xy'-x'y)\right).
\end{equation}
The associated Haar measure, i.e., the only (up to multiplicative constant) left-invariant measure on $\H^1$, is the standard Lebesgue measure $\mathcal L^3$ of $\R^3$. One can check that $\H^1$ is unimodular, that is, this measure is also right-invariant. 

A basis for the Lie algebra of left-invariant vector fields is given by
\begin{equation}
\label{eq:vf}
X_\H(x,y,z)=\partial_x-\frac{y}{2}\partial_z,\qquad 
Y_\H(x,y,z)=\partial_y+\frac{x}{2}\partial_z, \qquad 
Z_\H = \partial_z.
\end{equation}
These satisfy the commutator relations $[X_\H,Y_\H]=Z_\H$ and $[X_\H,Z_\H]=[Y_\H,Z_\H]=0$. 
%
The sub-Riemannian structure on $\H^1$ is defined by $\{X_\H,Y_\H\}$. This is a global orthonormal frame, and thanks to the above commutator relations, the sub-Riemannian manifold $(\H^1,\{X_\H,Y_\H\})$ is contact.


We let $\nH$ be the sub-Riemannian gradient of $\H^1$. Then, the  {\em Heisenberg sub-Laplacian} is the associated sub-Laplacian w.r.t.\ the Lebesgue measure, that reads
\begin{equation}
\label{eq:subLapl}
\Delta_\H =X_\H^2+Y_\H^2= \partial_x^2 +\partial_y^2+\frac{x^2+y^2}4\partial_z^2 + (x\partial_y-y\partial_x)\partial_z.  
\end{equation}

\begin{rem}[Fundamental solution]
By \cite[Thm.~2]{Folland1973}, the fundamental solution $\Gamma:\H^1\setminus\{0\}\to \R$  of the operator $-\Delta_\H$ is 
\begin{equation}
\label{eq:Gamma}
\Gamma(p)=\frac{1}{(8\pi) N(p)^2}.
\end{equation}
Here, $N$ is the Koranyi norm (see \cite[Section 2.2.1]{CDPT07}), given by
\begin{equation}
\label{eq:korany}
N(x,y,z)=((x^2+y^2)^2+16 z^2)^{1/4}.
\end{equation}
A simple computation shows that $\Gamma$ is not square-integrable on any compact set containing the origin nor on its complement. This is in contrast with what happens for the fundamental solution of the Euclidean Laplacian on $\R^3$, $\Gamma_{\R^3}(p)= (4\pi |p|)^{-1}$, which is square-integrable near the origin. 
\end{rem}

%



Associated with the group structure of $\H^1$ we have the family of \emph{anisotropic dilations} $\varrho_\lambda:\H^1\to \H^1$, $\lambda>0$, defined by 
\begin{equation}\label{eq:dilation}
  \varrho_\lambda(x,y,z) := (\lambda x,\lambda y, \lambda^2 z).
\end{equation}
One can check that the sub-Riemannian distance from the origin is $1$-homogeneous w.r.t.\ these dilations. Moreover,  we have 
\begin{equation}
  \mathcal{L}^3(\varrho_\lambda(\Omega)) = \lambda^4 \mathcal{L}^3(\Omega), \qquad \forall\lambda>0.
\end{equation}
As a consequence of these facts, the Hausdorff dimension of $\H^1$ is $4$ and $\mathcal H^4=\mathcal L^3$. That is, one more that its topological dimension. 



\section{Sub-Riemannian Sobolev spaces}\label{sec:fa-prelim}

Let $M$ be sub-Riemannian manifold with local generating family $\{X_1,\dots, X_m\}$, endowed with a smooth and positive measure $\omega$. 
We denote by $L^2(M,\omega)$ (or $L^2(M)$) the complex Hilbert space of (equivalence classes of) functions $u:M\to\C$ with scalar product 
\begin{equation}
( u,v)=\int_Mu\,\bar v\, d\omega,\qquad u,v\in L^2(M,\omega),
\end{equation}
where the bar denotes the complex conjugation. The corresponding norm is denoted by $\|u\|^2_{L^2(M)}=( u,u)$. Similarly, $L^2(TM,\omega)$ (or $L^2(TM)$) is the complex Hilbert space of sections of the complexified  tangent bundle $X:M\to T M^\mathbb C$, with scalar product
\begin{equation}
( X,Y)=\int_M g_q(X(q),\overline{Y(q)})\; d\omega(q),\qquad X,Y\in L^2(TM,\omega).
\end{equation}
Here, $g_q$ is the complexification of the scalar product on $\distr_q$ defined by polarization from the norm
\begin{equation}\label{eq:sR-norm}
|\xi|_q^2={\min}\left\{ \sqrt{\sum_{i=1}^m |u_i|^2} \;\bigg|\; \xi = \sum_{i=1}^m u_iX_i(q)\right\}, \qquad \xi\in \distr_q.
\end{equation}
The corresponding norm is $\|X\|^2_{L^2(M)} = (X,X)$. 
Observe that in the above the minimum can be removed if $\{X_1,\ldots,X_m\}$ is an orthonormal frame. In this case, we have
\begin{equation}
  \|X\|^2_{L^2(M)} = \int_M \sum_{i=1}^k |u_i|^2\,d\omega, \quad\text{ where }\quad X = \sum_{i=1}^k u_i X_i.
\end{equation}

Given an open set $\Omega\subset M$, the space $C^\infty_0(\Omega)$ is the space of smooth functions compactly supported in $\Omega$.
For $\Omega\subset M$ we then let $H^2_0(\Omega,\omega)$ (or $H^2_0(\Omega)$) to be the closure of $C^\infty_0(\Omega)$ w.r.t.\ the norm
\begin{equation}\label{eq:Sobolev}
  \|u\|_{H^2(\Omega)}^2=\|u\|_{L^2(\Omega)}^2+\|\Delta_\omega u\|_{L^2(\Omega)}^2,
\end{equation}
where $\Delta_\omega$ is the sub-Laplacian \eqref{eq:sub-lapl}. 
Namely, $H^2_0(\Omega)$ is the domain of the closure of $\Delta_\omega$ as an operator on $L^2(\Omega)$ with domain $C^\infty_0(\Omega)$.
In the following proposition we present a rather classical result ensuring that this space is actually the horizontal second-order Sobolev space.

\begin{prop}\label{prop:sub-ell}
%
  Let $\Omega\subset M$ be open and fix $q\in \Omega$. Then, for any open neighborhood $U\subset \Omega$ of $q$ there exists an open neighborhood $V\subset U$ of $q$ and $C>0$ such that we have, 
  \begin{equation}
    \|\nabla u\|_{L^2(\Omega)}^2+\sum_{i,j=1}^m \|X_iX_j u\|_{L^2(V)}^2\le C \|u\|_{H^2(\Omega)}^2, \qquad u\in C^\infty_0(\Omega).
  \end{equation}
\end{prop}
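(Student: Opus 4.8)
\textbf{Proof strategy for Proposition~\ref{prop:sub-ell}.}

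The plan is to combine the hypoelliptic a priori estimates of H\"ormander--Rothschild--Stein type with a simple localization and density argument. First I would reduce to the case of an orthonormal frame: since $q$ admits a neighborhood on which $k(\cdot)$ is locally constant (or, in the rank-varying case, one can still work with a local generating family), by Remark~\ref{r:orthonormal} we may choose $U$ small enough that $\{X_1,\dots,X_m\}$ restricted to $U$ can be taken to be a local generating frame with the structure needed below. On such a neighborhood the sub-Laplacian $\Delta_\omega = \sum_i X_i^2 + (\dive_\omega X_i)X_i$ is a H\"ormander sum-of-squares operator plus lower-order terms, hence hypoelliptic, and the subelliptic estimate of Rothschild--Stein (equivalently, the $L^2$ subelliptic estimate $\|u\|_{H^{\varepsilon}}^2 \lesssim (\Delta_\omega u, u) + \|u\|_{L^2}^2$ iterated) gives control of all horizontal derivatives up to order $2$ in the interior.

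The key steps, in order: (1) \emph{Gradient bound.} For the first-order term $\|\nabla u\|_{L^2(\Omega)}^2 = \sum_i \|X_i u\|_{L^2(\Omega)}^2$, integrate by parts: for $u\in C^\infty_0(\Omega)$ one has $\sum_i \|X_i u\|_{L^2(\Omega)}^2 = -(\Delta_\omega u, u)_{L^2(\Omega)} - \sum_i((\dive_\omega X_i) X_i u, u)_{L^2(\Omega)}$. The first summand is $\le \|\Delta_\omega u\|_{L^2(\Omega)}\|u\|_{L^2(\Omega)} \le \frac12\|u\|_{H^2(\Omega)}^2$; the second is bounded, using smoothness of the coefficients $\dive_\omega X_i$ on $\supp u$... wait, these are not globally bounded, but we only need the estimate with a constant $C$ that may depend on $\Omega$ (the statement says $C>0$, allowed to depend on $\Omega$, $U$, $V$), so on any fixed $\Omega$ we may absorb $\sum_i((\dive_\omega X_i)X_i u,u)$ via Cauchy--Schwarz and the elementary inequality $ab \le \frac12\delta a^2 + \frac1{2\delta} b^2$ into $\frac14\|\nabla u\|_{L^2(\Omega)}^2 + C'\|u\|_{L^2(\Omega)}^2$, and reabsorb the gradient term on the left. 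This yields $\|\nabla u\|_{L^2(\Omega)}^2 \le C\|u\|_{H^2(\Omega)}^2$. (2) \emph{Second-order horizontal bound.} Pick $V \subset\subset U$ a smaller neighborhood of $q$ and a cutoff $\chi\in C^\infty_0(U)$ with $\chi\equiv 1$ on $V$. Apply the Rothschild--Stein / Folland--Stein subelliptic a priori estimate for the sum-of-squares operator to $\chi u$: there is a constant depending only on $U,V$ and the frame such that $\sum_{i,j}\|X_iX_j(\chi u)\|_{L^2(U)}^2 \le C(\|\Delta_\omega(\chi u)\|_{L^2(U)}^2 + \|\chi u\|_{L^2(U)}^2)$. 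Then expand $\Delta_\omega(\chi u) = \chi\Delta_\omega u + [\Delta_\omega,\chi]u$, where $[\Delta_\omega,\chi]$ is a first-order differential operator with smooth compactly supported coefficients, hence $\|[\Delta_\omega,\chi]u\|_{L^2(U)} \le C(\|\nabla u\|_{L^2(U)} + \|u\|_{L^2(U)})$. Combining with step (1) and $\|X_iX_j u\|_{L^2(V)} = \|X_iX_j(\chi u)\|_{L^2(V)} \le \|X_iX_j(\chi u)\|_{L^2(U)}$ closes the estimate.

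The main obstacle is step (2): one must invoke the correct form of the subelliptic a priori inequality for H\"ormander operators with first-order lower-order terms (not merely pure sums of squares), and be careful that the constant is genuinely local, i.e., depends only on the geometry on $U$ and not on $u$. This is classical — it follows from \cite{Hormander1967} and the Rothschild--Stein lifting/approximation theorem, or from Folland--Stein estimates in the step-$2$ case relevant here — but citing it in the precise shape ``$\sum_{i,j}\|X_iX_jv\|_{L^2(V)}^2 \le C(\|\Delta_\omega v\|_{L^2(U)}^2 + \|v\|_{L^2(U)}^2)$ for $v\in C^\infty_0(U)$'' is the one nontrivial input; everything else (integration by parts, commutator estimates, cutoffs, absorption) is routine. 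Finally, the estimate extends from $C^\infty_0(\Omega)$ to $H^2_0(\Omega)$, if desired, by density, since the right-hand side is the $H^2(\Omega)$-norm by definition \eqref{eq:Sobolev}.
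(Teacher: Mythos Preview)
Your approach is essentially the same as the paper's: integration by parts for the gradient term, and the Rothschild--Stein a priori estimate for the second-order horizontal derivatives. Your step (2) spells out the cutoff localization that the paper leaves implicit in its citation of \cite[Theorem 18.d]{RS76}.

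There is one point in step (1) worth correcting. You write
\[
\sum_i \|X_i u\|_{L^2(\Omega)}^2 = -(\Delta_\omega u, u)_{L^2(\Omega)} - \sum_i((\dive_\omega X_i) X_i u, u)_{L^2(\Omega)},
\]
and then worry about bounding the divergence coefficients on $\Omega$. In fact that extra term is not there: since $\Delta_\omega = \dive_\omega\circ\nabla$, the divergence theorem gives directly $(\Delta_\omega u,u) = -\|\nabla u\|_{L^2(\Omega)}^2$ for $u\in C^\infty_0(\Omega)$ (equivalently, the first-order terms arising from $X_i^* = -X_i-\dive_\omega X_i$ cancel exactly against the first-order part of $\Delta_\omega$). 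This is what the paper uses. The distinction matters: your absorption argument would require $\dive_\omega X_i$ to be bounded on $\Omega$, which need not hold when $\Omega$ is non-compact (and the proposition is later applied with $\Omega = M$), whereas the exact identity needs no such hypothesis.
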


\begin{proof}
Let $u\in C^\infty_0(\Omega)$.  We start by observing that, since $(\Delta_\omega u,u)=-\|\nabla u\|_{L^2(\Omega)}^2$, by Cauchy-Schwarz and Young's inequality we have
  \begin{equation}\label{eq:est_grad}
    \|\nabla u\|_{L^2(\Omega)}^2\le \|\Delta_\omega u\|_{L^2(\Omega)}\|u\|_{L^2(\Omega)} \le  \frac{1}{2} \|\Delta_\omega u\|_{L^2(\Omega)}^2 + \frac{1}{2}\|u\|_{L^2(\Omega)}^2.
  \end{equation}
The result follows by \cite[Theorem 18.d]{RS76} (see also \cite[Remark 53]{Bramanti14}). In fact, the latter and \eqref{eq:est_grad} yield the existence of an open set $V\subset U$ such that
\begin{equation}
\sum_{i,j=1}^m \|X_iX_j u\|_{L^2(V)}^2 \le C \|u\|_{H^2(U)}^2\leq C \|u\|_{H^2(\Omega)}^2.
\end{equation}
Here, the last inequality follows since $\operatorname{supp}u\subset \Omega$.
\end{proof}

The following result highlights the relevance of $H^2_0(\Omega)$ for our purposes. We recall that a self-adjoint extension of a symmetric operator $A$ on an Hilbert space $\mathcal H$ is a self-adjoint operator $T$ such that $\dom A\subset \dom T$ and $A^*u=Tu$ for any $u\in\dom T$.
\begin{prop}\label{prop:only-closure}
  Let $\Omega\subset M$ be open and fix $p\in \Omega$. 
  Then, if $H^2_0(\Omega)=H^2_0(\Omega\setminus\{p\})$, the self-adjoint extensions of the sub-Laplacian with domain $C^\infty_0(\Omega)$ and $C^\infty_0(\Omega\setminus\{p\})$ coincide.
\end{prop}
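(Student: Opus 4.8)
The plan is to deduce the proposition from the following abstract fact about self-adjoint extensions: if $A \subset B$ are two symmetric operators on a Hilbert space $\mathcal H$ with the \emph{same} closure $\bar A = \bar B$, then they have the same adjoint $A^* = B^*$, hence the same deficiency spaces and, by von Neumann's theory, exactly the same self-adjoint extensions. So first I would set $A := \Delta_\omega$ with domain $C^\infty_0(\Omega\setminus\{p\})$ and $B := \Delta_\omega$ with domain $C^\infty_0(\Omega)$; clearly $A \subset B$ since $C^\infty_0(\Omega\setminus\{p\}) \subset C^\infty_0(\Omega)$ and the operators agree on the smaller domain. Both are symmetric on $L^2(\Omega,\omega)$ because $\Delta_\omega$ is formally self-adjoint w.r.t.\ $\omega$ (this is exactly the point of the first-order correction terms in \eqref{eq:sub-lapl}).

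Next I would identify the closures with the Sobolev spaces introduced just before the statement. By definition, the domain of $\bar B$ is $H^2_0(\Omega)$ equipped with the graph norm \eqref{eq:Sobolev}, and the domain of $\bar A$ is $H^2_0(\Omega\setminus\{p\})$ with the graph norm of $\Delta_\omega$ on $\Omega\setminus\{p\}$ — but note that for $u$ supported away from $p$, $\|\Delta_\omega u\|_{L^2(\Omega\setminus\{p\})} = \|\Delta_\omega u\|_{L^2(\Omega)}$ and likewise for the $L^2$ norm, since $\{p\}$ has $\omega$-measure zero. Hence the two graph norms literally coincide on $C^\infty_0(\Omega\setminus\{p\})$, and $\dom\bar A = H^2_0(\Omega\setminus\{p\})$ as a subspace of $H^2_0(\Omega)$, with $\bar A = \bar B$ restricted there. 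The hypothesis $H^2_0(\Omega) = H^2_0(\Omega\setminus\{p\})$ therefore says precisely that $\dom \bar A = \dom \bar B$, and since both closures act as $\Delta_\omega$ (in the distributional sense, say), we get $\bar A = \bar B$.

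From $\bar A = \bar B$ I would conclude $A^* = (\bar A)^* = (\bar B)^* = B^*$. Then recall that every self-adjoint extension $T$ of a symmetric operator $S$ satisfies $\bar S \subset T \subset S^*$, and conversely self-adjoint operators squeezed between $\bar S$ and $S^*$ are exactly the self-adjoint extensions; since the pair $(\bar A, A^*)$ equals the pair $(\bar B, B^*)$, the sets of self-adjoint extensions coincide. (One should double-check that the notion of self-adjoint extension recalled in the excerpt — $T$ with $\dom A \subset \dom T$ and $A^* u = Tu$ on $\dom T$ — matches this; it does, because any such $T$ is self-adjoint and contained in $A^*$, hence contains $\bar A$, and the reverse inclusions give the same characterization for $B$.)

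I do not expect a serious obstacle here; the statement is essentially a bookkeeping exercise once one observes that "closure of $\Delta_\omega$ on $C^\infty_0$" is by construction the operator whose domain is $H^2_0$. The only mild subtlety worth spelling out is the identification of the two graph norms — i.e.\ that removing the single point $p$ changes neither the $L^2$ norm nor the action of $\Delta_\omega$ on test functions supported in the punctured domain — which is immediate from $\omega(\{p\}) = 0$. If I wanted to be careful I would also note that $\Delta_\omega$ on $C^\infty_0(\Omega)$ is densely defined (so that adjoints make sense), which holds since $\Omega$ is open and nonempty.
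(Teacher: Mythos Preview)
Your proposal is correct and follows essentially the same approach as the paper: both arguments reduce to the observation that the closures of $\Delta_\omega$ on $C^\infty_0(\Omega)$ and on $C^\infty_0(\Omega\setminus\{p\})$ coincide under the hypothesis, and that operators with the same closure have the same self-adjoint extensions. The paper invokes \cite[Theorem~VIII.1]{RS1} for the latter fact, while you spell it out via $A^* = (\bar A)^* = (\bar B)^* = B^*$ and the squeeze $\bar S \subset T \subset S^*$; your version is slightly more detailed but the content is the same.
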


\begin{proof}
  By \cite[Theorem~VIII.1]{RS1}, any self-adjoint extension of a given operator is also a self-adjoint extension of its closure, and viceversa. In particular, if two operators have the same closure, then also their self-adjoint extensions coincide. In our case, the assumption $H^2_0(\Omega)=H^2_0(\Omega\setminus\{p\})$ entails exactly that the closure of $\Delta_\omega$ with domains $C^\infty_0(\Omega)$ and $C^\infty_0(\Omega\setminus\{p\})$ are the same.
\end{proof}

In the remainder of the section we derive some essential properties of  $H^2_0(\Omega)$. 

\begin{prop}\label{prop:brezis}
  Let $\Omega\subset \Omega'\subset  M$ be open sets with smooth boundary, and $u\in L^2(\Omega)$. Then, $u\in H^2_0(\Omega)$ if and only if $u_e\in H^2_0(\Omega')$, where
  \begin{equation}
    u_e(p):=
    \begin{cases}
      u(p),&\quad\text{if } p\in \Omega,\\
      0,&\quad\text{otherwise}.
    \end{cases}
  \end{equation}
\end{prop}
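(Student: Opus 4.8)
The plan is to prove the two implications separately. The forward one, ``$u\in H^2_0(\Omega)\Rightarrow u_e\in H^2_0(\Omega')$'', does not use the smoothness of the boundary: given $u_n\in C^\infty_0(\Omega)$ converging to $u$ in the norm~\eqref{eq:Sobolev}, each zero--extension $(u_n)_e$ lies in $C^\infty_0(\Omega')$ and satisfies $\Delta_\omega\big((u_n)_e\big)=(\Delta_\omega u_n)_e$, so $\big((u_n)_e\big)_n$ is Cauchy in $H^2(\Omega')$ with $L^2$--limit $u_e$, whence $u_e\in H^2_0(\Omega')$.

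For the converse, assume $u_e\in H^2_0(\Omega')$. A first remark is that the distribution $\Delta_\omega u_e$ on $\Omega'$, which by hypothesis belongs to $L^2(\Omega')$, must coincide with $(\Delta_\omega u)_e$: it agrees with the latter on the open sets $\Omega$ and $\Omega'\setminus\overline\Omega$, which together exhaust $\Omega'$ up to the Lebesgue--null set $\partial\Omega$. Hence $\Delta_\omega u\in L^2(\Omega)$ and $\|u\|_{H^2(\Omega)}=\|u_e\|_{H^2(\Omega')}<\infty$; the only issue is to approximate $u$ in this norm by functions in $C^\infty_0(\Omega)$. I would then carry out two reductions. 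First, multiplying a sequence $\phi_n\in C^\infty_0(\Omega')$ with $\phi_n\to u_e$ in $H^2(\Omega')$ by cutoffs $\eta_\delta$ equal to $1$ near $\overline\Omega$ and supported in $\{\dist(\cdot,\overline\Omega)<\delta\}$, and using the Leibniz rule for $\Delta_\omega$ together with the bound $\|\nabla(\phi_n-u_e)\|_{L^2}\le C\|\phi_n-u_e\|_{H^2}$ contained in the proof of Proposition~\ref{prop:sub-ell}, one gets $\eta_\delta\phi_n\to u_e$ in $H^2$ of the $\delta$--collar; thus $u_e\in H^2_0(\Omega_\delta)$ for every small $\delta>0$, so $\Omega'$ may be taken to be an arbitrarily thin neighbourhood of $\overline\Omega$. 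Second, by a partition of unity (and, if $\Omega$ is unbounded, an exhaustion), writing $u_e=\sum_j\psi_j u_e$ with each $\psi_j u_e\in H^2_0(\Omega')$ supported in a small set, one reduces to the case where $\supp u_e$ is contained in a small ball, star--shaped with respect to the anisotropic dilations $\varrho^{c}_\lambda$ of the local normal form around some point $c$ lying slightly inside $\Omega$.

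The core step is then to contract the support strictly inside $\Omega$: the functions $v_t:=u_e\circ\varrho^{c}_{1+t}$ (for $t>0$ small) are supported in $\varrho^{c}_{1/(1+t)}(\supp u_e)\subset\subset\Omega$, hence their mollifications -- taken with respect to the group convolution of the nilpotent approximation -- lie in $C^\infty_0(\Omega)$. It remains to check that $v_t\to u_e$ and, crucially, $\Delta_\omega v_t\to\Delta_\omega u_e$ in $L^2$ as $t\to0$; once this is established, a diagonal argument over $t$ and over the mollification parameter produces the approximating sequence in $C^\infty_0(\Omega)$, and the same scheme glues the boundary pieces to the trivial interior region.

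The step I expect to be the main obstacle is precisely $\Delta_\omega v_t\to\Delta_\omega u_e$: writing $\Delta_\omega v_t=(A_t u_e)\circ\varrho^{c}_{1+t}$ with $A_t$ a second--order operator depending smoothly on $t$ and $A_0=\Delta_\omega$, one must control $\|(A_t-\Delta_\omega)u_e\|_{L^2}$, and the only second--order information available on $u_e$ is the $L^2_{\mathrm{loc}}$ bound on the \emph{horizontal} Hessian $X_iX_j u_e$ supplied by Proposition~\ref{prop:sub-ell} -- there is no control in bracket--generated directions. The argument goes through only because $\varrho^{c}_\lambda$ \emph{preserves the horizontal distribution} $\distr$ (its generator being an infinitesimal contact transformation): then $A_t$ is again a ``sum of squares plus first order'' operator in the given frame, with coefficients converging to those of $\Delta_\omega$, so that $A_t-\Delta_\omega$ involves only horizontal derivatives of order $\le 2$ and its action on $u_e$ is bounded, via Proposition~\ref{prop:sub-ell}, by $C\,t\,\|u_e\|_{H^2(\Omega')}\to 0$; the same distribution--preservation makes the mollification commute with $\Delta_\omega$ up to terms controlled in the same way. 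Finally, I note that the naive alternative of cutting $u$ off near $\partial\Omega$ by a function $\eta$ vanishing there cannot work: the resulting terms $\langle\nabla\eta,\nabla u\rangle$ and $u\,\Delta_\omega\eta$ would require a Hardy--type inequality of exactly the kind shown to be false for $\H$ in Appendix~\ref{sec:hardy}.
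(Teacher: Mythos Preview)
Your proposal is essentially the argument the paper has in mind: the paper's own proof is nothing more than a pointer to \cite[Proposition~9.18]{Brezis2011} with the assertion that ``the same arguments extend in a straightforward way,'' and what you have written is precisely that extension spelled out (localise by a partition of unity, push the support strictly into~$\Omega$, mollify, pass to the diagonal). The one substantive deviation from Brezis is that you replace the inward Euclidean translation by an anisotropic dilation $\varrho^{c}_{1+t}$, and you correctly identify why: a Euclidean translation does not preserve the horizontal distribution, so the error operator would contain non-horizontal second derivatives that Proposition~\ref{prop:sub-ell} does not bound, whereas $\varrho^{c}_{\lambda}$ does preserve $\distr$ and yields an $A_t-\Delta_\omega$ built only from $X_iX_j$, $X_i$ and zero-order terms. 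This point is exactly what the paper's ``straightforward'' is sweeping under the rug, and your treatment of it is sound. Two small remarks: (i) on $\H^1$ itself one can use \emph{left translations} instead of dilations, since $\Delta_\H$ is left-invariant and group convolution then commutes with it exactly, so no error term arises at all---this is the cleanest version of the argument and perhaps closer to what the authors had in mind; (ii) in your general step you should make explicit that the centre $c$ is chosen so that the generator of $\varrho^{c}_\lambda$ is transverse to $\partial\Omega$ on the small patch (this is where smoothness of $\partial\Omega$ is actually used), otherwise the contraction need not bring $\supp u_e\cap\partial\Omega$ into $\Omega$.
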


\begin{proof}
  The result for the Euclidean Sobolev space of order one is well-known \cite[Proposition~9.18]{Brezis2011}. The same arguments extends in a straightforward way to the case under consideration.
\end{proof}

In view of the above, for any $\Omega\subset\Omega'\subset\bH^1$ we will always identify $H^2_0(\Omega)$ with the set of the corresponding $u_e$ in $H^2_0(\Omega')$. In particular, for any smooth open set $\Omega_1,\Omega_2\subset M$ it holds
\begin{equation}\label{eq:intersection}
    H^2_0(\Omega_1)\cap H^2_0(\Omega_2)=H^2_0(\Omega_1\cap \Omega_2).
\end{equation}

In the sequel we will need the following simple fact, which we will apply with $\Omega_1 = \Omega \setminus \bar B_{\varepsilon/2}(p)$  and $\Omega_2=B_\varepsilon(p)$, where $\Omega$ is a smooth open set, $p\in \Omega$, and $B_r(p)$ stands for the open ball at $p$ of radius $r>0$.

\begin{lem}\label{lem:union-sobolev}
  Let $\Omega_1,\Omega_2 \subset M$ be smooth open sets such that $\partial \Omega_1\cap \partial\Omega_2= \varnothing$ and {$\Omega_1\cap\Omega_2$ is relatively compact in $M$}.
  Then, 
  \begin{equation}
    H^2_0(\Omega_1)+H^2_0(\Omega_2)=H^2_0(\Omega_1\cup\Omega_2).
  \end{equation}
\end{lem}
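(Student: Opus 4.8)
The plan is to prove the two inclusions separately, with the nontrivial one being $H^2_0(\Omega_1\cup\Omega_2)\subseteq H^2_0(\Omega_1)+H^2_0(\Omega_2)$. The inclusion $H^2_0(\Omega_1)+H^2_0(\Omega_2)\subseteq H^2_0(\Omega_1\cup\Omega_2)$ is immediate from Proposition~\ref{prop:brezis}, since a function in $H^2_0(\Omega_i)$, extended by zero, lies in $H^2_0(\Omega_1\cup\Omega_2)$ (here $\Omega_1\cup\Omega_2$ is again a smooth open set because $\partial\Omega_1\cap\partial\Omega_2=\varnothing$), and the sum of two such functions is again in the (linear) space $H^2_0(\Omega_1\cup\Omega_2)$.

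For the reverse inclusion, the idea is a partition of unity argument. Since $\partial\Omega_1\cap\partial\Omega_2=\varnothing$, the sets $\Omega_1\setminus\overline{\Omega_2}$ and $\Omega_2\setminus\overline{\Omega_1}$ together with $\Omega_1\cap\Omega_2$ form an open cover of $\Omega_1\cup\Omega_2$ with the property that no point of $\partial\Omega_1$ lies in $\overline{\Omega_2\setminus\overline{\Omega_1}}$ and vice versa. More concretely, I would choose a smooth cutoff $\chi\colon M\to[0,1]$ with $\chi\equiv 1$ on a neighborhood of $\overline{\Omega_1}\setminus\Omega_2$ and $\supp\chi\cap\overline{\Omega_2}\subset\Omega_1$ (this is possible precisely because the closed sets $\overline{\Omega_1}\setminus\Omega_2$ and $\overline{\Omega_2}\setminus\Omega_1$ are disjoint — their disjointness follows from $\partial\Omega_1\cap\partial\Omega_2=\varnothing$). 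Then for $u\in C^\infty_0(\Omega_1\cup\Omega_2)$ one writes $u=\chi u+(1-\chi)u$, and checks that $\chi u\in C^\infty_0(\Omega_1)$ while $(1-\chi)u\in C^\infty_0(\Omega_2)$: indeed $\supp(\chi u)\subset\supp u\cap\supp\chi$, which is a compact subset of $\Omega_1$ by the choice of $\chi$, and $\supp((1-\chi)u)\subset\supp u\cap\{\chi<1\}$, which is a compact subset of $\Omega_2$ since $\chi\equiv 1$ near $\overline{\Omega_1}\setminus\Omega_2$. For a general $u\in H^2_0(\Omega_1\cup\Omega_2)$, approximate by $u_n\in C^\infty_0(\Omega_1\cup\Omega_2)$ in the $H^2$-norm; then $\chi u_n\to\chi u$ and $(1-\chi)u_n\to(1-\chi)u$, and one must verify that multiplication by the fixed smooth function $\chi$ is continuous on $H^2_0$. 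This is where the relative compactness of $\Omega_1\cap\Omega_2$ enters: we may take $\chi$ compactly supported (its support is contained in a neighborhood of $\overline{\Omega_1\cap\Omega_2}$ together with $\overline{\Omega_1}\setminus\Omega_2$, but one can arrange $\chi$ to have compact support after intersecting with a large ball, since $u$ itself is a limit of compactly supported functions — more carefully, one localizes near the relatively compact transition region $\Omega_1\cap\Omega_2$), so that $\chi$, $\nabla\chi$, $X_iX_j\chi$ are all bounded, and then Proposition~\ref{prop:sub-ell} gives
\begin{equation}
  \|\Delta_\omega(\chi v)\|_{L^2}\le \|\chi\,\Delta_\omega v\|_{L^2}+C\|\nabla v\|_{L^2(\supp\chi)}+C\sum_{i,j}\|X_iX_j v\|_{L^2(\supp\chi)}\le C'\|v\|_{H^2}
\end{equation}
for $v\in C^\infty_0(\Omega_1\cup\Omega_2)$, using the product-rule expansion $\Delta_\omega(\chi v)=\chi\Delta_\omega v+2\langle\nabla\chi,\nabla v\rangle+v\,\Delta_\omega\chi$ (the middle term being controlled by $\|\nabla\chi\|_\infty\|\nabla v\|_{L^2}$). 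Hence multiplication by $\chi$ extends continuously to $H^2_0(\Omega_1\cup\Omega_2)$, and the limits $\chi u$, $(1-\chi)u$ belong to $H^2_0(\Omega_1)$ and $H^2_0(\Omega_2)$ respectively, giving $u=\chi u+(1-\chi)u\in H^2_0(\Omega_1)+H^2_0(\Omega_2)$.

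The main obstacle I anticipate is the continuity of multiplication by $\chi$ on $H^2_0$: since the $H^2$-norm is defined only through $\|\Delta_\omega u\|_{L^2}$ and not through all second-order horizontal derivatives a priori, one genuinely needs the subelliptic estimate of Proposition~\ref{prop:sub-ell} to recover $\nabla u$ and $X_iX_j u$ on the support of $\chi$. The only subtlety is that Proposition~\ref{prop:sub-ell} is a local statement, so $\supp\chi$ must be covered by finitely many of the neighborhoods $V$ it produces — which is exactly why we need $\supp\chi$ to be compact, and this is guaranteed by the relative compactness hypothesis on $\Omega_1\cap\Omega_2$ (the transition happens in a bounded region) together with truncating by a large ball away from it. A secondary point to handle carefully is the approximation step: one should check that $u_n\in C^\infty_0(\Omega_1\cup\Omega_2)$ with $u_n\to u$ in $H^2$ can indeed be chosen, which is the definition of $H^2_0(\Omega_1\cup\Omega_2)$, and that $\chi u$ is then well-defined independently of the approximating sequence, which follows from the continuity estimate above.
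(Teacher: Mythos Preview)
Your proof is correct and follows essentially the same partition-of-unity argument as the paper. One simplification worth noting: the product rule $\Delta_\omega(\chi v)=\chi\,\Delta_\omega v+2\langle\nabla\chi,\nabla v\rangle+v\,\Delta_\omega\chi$ involves only \emph{first} horizontal derivatives of $v$ (besides $\Delta_\omega v$ itself), so the $\sum_{i,j}\|X_iX_j v\|$ terms in your displayed estimate are superfluous, and since the gradient bound $\|\nabla v\|_{L^2(\Omega)}\le C\|v\|_{H^2(\Omega)}$ in Proposition~\ref{prop:sub-ell} is already global, your concern about covering $\supp\chi$ by finitely many local neighborhoods (and about arranging $\chi$ to have compact support) evaporates --- the paper simply observes that $\nabla\chi$ and $\Delta_\omega\chi$ are supported in the relatively compact set $\Omega_1\cap\Omega_2$, hence bounded, which is all that is needed.
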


\begin{proof}
%
  We start by proving the inclusion $H^2_0(\Omega_1)+H^2_0(\Omega_2)\subset H^2_0(\Omega_1\cup\Omega_2)$.
  Let $u_1\in H^2_0(\Omega_1)$ and $u_2\in H^2_0(\Omega_2)$, i.e., there exists $(u^n_i)_n\subset C^\infty_0(\Omega_i)$ such that $u^n_i\rightarrow u_i$ w.r.t.\ the $H^2(\Omega_i)$ norm, $i=1,2$.  Then, the sequence $u_1^n+u_2^n\in C^\infty_0(\Omega_1\cup\Omega_2)$ satisfies
  \begin{equation}
    \|u_1^n+u_2^n-(u_1+u_2)\|_{H^2_0(\Omega_1\cup\Omega_2)} \leq  \|u_1^n-u_1\|_{H^2_0(\Omega_1\cup\Omega_2)}+ \|u_2^n-u_2\|_{H^2_0(\Omega_1\cup\Omega_2)}\stackrel{n\to\infty}{\longrightarrow} 0,
  \end{equation}
which proves the claim.

  We now turn to the other inclusion.
  Let $\chi_1,\chi_2\in C^\infty(\Omega_1\cup\Omega_2)$ be such that $\chi_1+\chi_2=1$, $0\le \chi_i\le 1$ for $i=1,2$, $\chi_1\equiv 1$ on $\Omega_1\setminus\Omega_2$, and $\chi_1\equiv 0$ on $\Omega_2\setminus\Omega_1$. 
  Such smooth functions exist thanks to the fact that $\partial\Omega_1\cap\partial\Omega_2=\varnothing$. 
  Moreover, since $\Omega_1\cap\Omega_2$ is relatively compact, there exists $c>0$ such that $|\nabla\chi_i|\le c$ and $|\Delta_\omega \chi_i|\le c$, $i=1,2$.
  
  Given $u\in H^2_0(\Omega_1\cup\Omega_2)$, let $(u_n)_n\subset C^\infty_0(\Omega_1\cup\Omega_2)$ such that $u_n\rightarrow u$ in $H^2(\Omega_1\cup\Omega_2)$. Then, $\chi_i u_n\rightarrow \chi_i u$ in $L^2(\Omega_i)$, $i=1,2$, and
  \begin{equation}\label{eq:conto}
    \begin{split}
      \|\nabla(\chi_i u_n)-\nabla(\chi_i u)\|_{L^2(\Omega_i)} 
        &\le \|\chi_i(\nabla u_n -\nabla u)\|_{L^2(\Omega_i)} +\||\nabla\chi_i|(u_n-u)\|_{L^2(\Omega_i)}\\
        &\le \|\nabla u_n -\nabla u\|_{L^2(\Omega_1\cup\Omega_2)}+c\|u_n-u\|_{L^2(\Omega_1\cup\Omega_2)}.
    \end{split}
  \end{equation}
On the other hand, by Proposition~\ref{prop:sub-ell} we have $\|\nabla(u_n-u)\|_{L^2(\Omega_1\cup\Omega_2)}\leq C\|u_n-u\|_{H^2_0(\Omega_1\cup\Omega_2)}$, and hence \eqref{eq:conto}  shows that $\nabla(\chi_iu_n)\to \nabla(\chi_iu)$ in $L^2(\Omega_i)$, $i=1,2$.
  Thanks to this fact, a similar computation on $\Delta_\omega(\chi_i u_n)-\Delta_\omega(\chi_i u)$  yields $\chi_i u_n\rightarrow \chi_iu$ in $H^2(\Omega_i)$, $i=1,2$. This proves that $u = \chi_1 u+ \chi_2 u\in H^2(\Omega_1)+H^2(\Omega_2)$.
\end{proof}

\section{Essential self-adjointness of the Heisenberg pointed sub-laplacian}\label{s:Heis}

In this section we focus on the pointed sub-Laplacian in the Heisenberg group. We start by proving Theorem~\ref{thm:Heis} via non-commutative harmonic analysis techniques. 
 We then conclude the section by localizing this result in Theorem~\ref{thm:heis-delta-local}. That is, we show that the self adjoint extensions of the pointed sub-Laplacian on a domain $\Omega\subset \bH^1$ coincide with those of the (standard) sub-Laplacian on the same domain.

\subsection{Deficiency spaces of pointed operators}
In what follows, $\cS(\R^3)$ is the Schwartz space on $\R^3$ and $\cS'(\R^3)$ the space of tempered distributions on $\R^3$. We denote by $\langle T ,u \rangle$ the action of $T\in\mathcal S'(\R^3)$ on $u\in \mathcal S(\R^3)$.
Observe that, given a real symmetric operator $A$ on $L^2(\R^3)$, with $\cS(\R^3)\subset\Dom{A}$ and $A(\cS(\bR^3))\subset \cS(\bR^3)$, its action on $T\in\cS'(\R^3)$ is defined as
\begin{equation}\label{eq:def_sym_distr}
\langle AT,u\rangle=\langle T,Au\rangle\qquad\forall u\in\cS(\R^3).
\end{equation}

The Dirac's delta centered at the origin, is the distribution $\delta_0$ defined as
\begin{equation}
\langle \delta_0, u\rangle=u(0),\qquad u\in  \mathcal S(\R^3).
\end{equation}
For a multi-index $\alpha=(\alpha_1,\alpha_2,\alpha_3)$, we let $D^\alpha = \partial_x^{\alpha_1}\partial_y^{\alpha_2}\partial_z^{\alpha_3}$. Recall that $\langle D^\alpha \delta_0, u \rangle = D^\alpha u(0)$ for $u\in \cS(\R^3)$.

The following lemma is the adaptation to our setting of \cite[Lemma~1]{Pavlov1989}.

\begin{lem}\label{lem:Pavlov}
Let $A$ be a real essentially self-adjoint operator on $L^2(\R^3)$, with domain $\cS(\R^3)$, and $A_0$ be the restriction of $A$ to $C^\infty_0(\R^3\setminus \{0\})$. 
Assume, moreover, that $\operatorname{rng} A\subset \cS(\R^3)$.
%
Then, a function $\theta\in L^2(\R^3)$, $\theta\not\equiv 0$, belongs to the deficiency space $\mathcal K_-(A_0)=\operatorname{Ker}(A_0^*+i)$ of $A_0$ if and only if 
\begin{equation}\label{eq:lem}
(A+i)\theta=\sum_{|\alpha|\le N}c_\alpha D^\alpha\delta_0,
\end{equation}
for some $N\in\N$ and non-identically zero coefficients $(c_\alpha)_{|\alpha|\le N}\subset \mathbb C$.
\end{lem}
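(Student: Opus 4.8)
The plan is to characterize the deficiency space $\mathcal K_-(A_0) = \operatorname{Ker}(A_0^* + i)$ by unwinding the definition of the adjoint on the domain $C^\infty_0(\R^3\setminus\{0\})$ and then using the essential self-adjointness of $A$ on $\mathcal S(\R^3)$ to pin down the structure of the obstruction. First I would observe that $\theta \in \mathcal K_-(A_0)$ means precisely that $\theta \in L^2(\R^3)$ and $(A_0^* + i)\theta = 0$, i.e.\ $\langle \theta, (A - i)u\rangle = 0$ for all $u \in C^\infty_0(\R^3\setminus\{0\})$ (here I use that $A$ is real and symmetric, so $A_0^*$ acts as $A$ in the distributional sense). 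Equivalently, writing $(A + i)\theta$ as a tempered distribution via \eqref{eq:def_sym_distr} — which is legitimate because $\theta \in L^2 \subset \mathcal S'$, $A(\mathcal S) \subset \mathcal S$, and $\operatorname{rng} A \subset \mathcal S$ — the condition says $(A + i)\theta$ is a distribution supported at $\{0\}$. By the classical structure theorem for distributions supported at a point, any such distribution is a finite linear combination $\sum_{|\alpha|\le N} c_\alpha D^\alpha \delta_0$. This gives the forward implication, with the caveat that one must check the combination is not identically zero.

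\textbf{The non-triviality of the coefficients.} The key point making the ``only if'' direction work is that the coefficients cannot all vanish when $\theta \not\equiv 0$: if $(A+i)\theta = 0$ in $\mathcal S'(\R^3)$, then testing against $u \in \mathcal S(\R^3)$ gives $\langle \theta, (A - i)u \rangle = 0$ for all such $u$; since $A$ is essentially self-adjoint on $\mathcal S(\R^3)$, the operator $A - i$ has dense range in $L^2(\R^3)$ (the deficiency space $\operatorname{Ker}(A^* - i)$ is trivial), forcing $\theta \equiv 0$, a contradiction. This is where the hypothesis that $A$ (as opposed to $A_0$) is essentially self-adjoint is used in an essential way.

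\textbf{The converse.} For the ``if'' direction, suppose $\theta \in L^2(\R^3)$, $\theta \not\equiv 0$, satisfies $(A+i)\theta = \sum_{|\alpha|\le N} c_\alpha D^\alpha \delta_0$ with not all $c_\alpha$ zero. For any $u \in C^\infty_0(\R^3\setminus\{0\})$ we have $D^\alpha u(0) = 0$, hence $\langle (A+i)\theta, u\rangle = 0$, which by \eqref{eq:def_sym_distr} reads $\langle \theta, (A - i)u\rangle = 0$; since $(A-i)u = (A_0 - i)u$ on this domain and $A_0$ is symmetric, this says exactly $\theta \in \operatorname{Ker}(A_0^* + i) = \mathcal K_-(A_0)$. (One must also note $\theta \not\equiv 0$ is compatible with the right-hand side being nonzero — indeed if $\theta$ were zero the right-hand side would be zero too, so the hypothesis that the $c_\alpha$ are not all zero is consistent.)

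\textbf{Main obstacle.} The routine parts are the distributional bookkeeping and the appeal to the structure theorem for point-supported distributions. The one subtlety worth care — and the step I expect to need the most attention — is justifying that $(A+i)\theta$, a priori only an element of $\mathcal S'(\R^3)$ obtained from the general duality \eqref{eq:def_sym_distr}, really is supported at the origin given only that $\theta \perp (A_0 - i)\big(C^\infty_0(\R^3\setminus\{0\})\big)$: one must verify that testing against $C^\infty_0(\R^3\setminus\{0\})$ (rather than all of $\mathcal S$) suffices to conclude the support is $\{0\}$, which is immediate once one recalls that a distribution vanishing on all test functions supported away from a closed set is by definition supported in that set. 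The hypotheses $A(\mathcal S)\subset \mathcal S$ and $\operatorname{rng} A \subset \mathcal S$ are exactly what make the pairing $\langle (A+i)\theta, u\rangle = \langle \theta, (A-i)u\rangle$ meaningful for $u \in \mathcal S$, and hence what allow the transition from the restricted orthogonality to the global distributional statement.
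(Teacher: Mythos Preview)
Your proposal is correct and follows essentially the same route as the paper: unwind the adjoint condition to see that $(A+i)\theta$ is a tempered distribution supported at the origin, invoke the structure theorem for point-supported distributions, and rule out the zero right-hand side via essential self-adjointness of $A$. The only cosmetic difference is that the paper tracks the complex conjugation explicitly (writing $\langle(A+i)\theta,\bar u\rangle$ for the duality pairing) and phrases the non-triviality step as ``$(A+i)\theta=0$ forces $A\theta\in L^2$, hence $\theta\in\dom A^*=\dom\bar A$ and $\theta\in\mathcal K_-(A)=\{0\}$'', whereas you argue via dense range of $A-i$; these are equivalent.
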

\begin{proof}
Let us denote by $(\cdot,\cdot)$ the scalar product on $L^2(\bR^3)$, and recall that $(f, g) = \langle f, \bar g\rangle$ for all $f,g\in L^2(\bR^3)$.
By density of $\Dom{A_0}$, we have that $\theta\in\cK_-({A_0})$ if and only if for every $u\in \Dom {A_0}$ we have
\begin{eqnarray}
0&=\left( ({A_0}^*+i)\theta, u\right)&\quad \text{(density of $\Dom{A_0}$)}\\
&=\left( \theta,({A_0}-i)u\right)&\quad \text{(definition of adjoint)}\\
&=\left( \theta,(A-i)u\right)&\quad \text{(${A_0} u= Au$ if $u\in \Dom{A_0}$)}\\
\label{eq:hypo}&=\left\langle \theta,(A+i)\bar u\right\rangle &\quad \text{($u,Au\in \cS(\R^3)$\text{, and }$A$\text{ is real})}\\
\label{eq:defsp}&=\left\langle (A+i)\theta,\bar u\right\rangle&\quad \text{($A$ is symmetric, see \eqref{eq:def_sym_distr})}.
\end{eqnarray}
Since $\langle D^\alpha \delta_0, v\rangle=0$ for all $v\in \dom A_0\subset \cS(\R^3)$ and $\alpha\in\N^3$, letting $u=\bar v$ in the above we get that $\theta\in \cK_-({A_0})$ if and only if 
\begin{equation}\label{eq:pippo}
\left\langle(A+i)\theta+ D^\alpha\delta_0,v\right\rangle=0,\qquad  {\forall v\in C^\infty_0(\R^3\setminus\{0\})}, \, \forall\alpha\in\N^3.
\end{equation}

By definition of support of a distribution and the density of $C^\infty_0(\R^3\setminus \{0\})$ in the space of $\cS(\R^3)$ functions supported away from $\{0\}$, the latter is equivalent to the fact that the support of the distribution on the left-hand side is contained in $\{0\}$. 
Since the only distributions supported at the origin are finite linear combinations of the Dirac delta and its derivatives, in order to complete the proof it suffices to exclude the case $(A+i)\theta =0$ for $\theta\not\equiv 0$.
This follows since $(A+i)\theta=0$ in the sense of distributions implies that the distribution $A\theta$ belongs to $L^2(\R^3)$. In particular, $\theta\in\dom A^*$ and thus, by essential self-adjointness of $A$, it holds $\theta\in \cK_-(A) = \{0\}$.
\end{proof}

\subsection{Essential self-adjointness of the pointed sub-Laplacian on $\H^1$}\label{s:nc-fourier}

In this section we apply Lemma~\ref{lem:Pavlov} to the sub-Laplacian on $\H^1$, via non-commutative harmonic analysis. This is done in Section~\ref{sss:Heis} and requires some preliminary work that is presented in the next section. 

\subsubsection{Non-commutative harmonic analysis on $\H^1$}
For a classical reference on this topic we refer to \cite{Stein1993}, see also  \cite{Agrachev2009, Fischer2016} for a more recent exposition, and  \cite{Bahouri2016,Bahouri2017} for the definition of the Fourier transform on tempered distributions. See also the pioneering works \cite{Geller1977,Geller1980}.

The Schr\"odinger representations $(\kX^\lambda)_{\lambda\in\bR\setminus\{0\}}$ of $\H^1$ act on $u\in L^2(\bR)$ by
\begin{equation}\label{eq:schrodinger}
  \kX^\lambda_{(x,y,z)} u(\xi):=e^{i\lambda(z-y\xi+\frac{xy}{2})}u(\xi-x),\qquad \xi\in\R,\, (x,y,z)\in \H^1.
\end{equation}
By Stone-Von Neumann Theorem, the dual space $\hat\bH^1$ of $\bH^1$ (i.e., the space of equivalence classes of irreducible representations of $\bH^1$) is the disjoint union of the Schr\"odinger representations and of the Pontryiagin dual of $\bR^2$.
The non-commutative Fourier transform defines an isometry between $L^2(\bH^1)$ and $L^2(\hat \bH^1)$. The latter is the space of operator-valued functions which associate to a representation $\pi\in \hat\bH^1$ (actually, to an equivalence class) an Hilbert-Schmidt operator on its representation space $\mathcal H_\pi$, endowed with the Plancherel measure $d\hat\mu$.

Since it turns out that the measure $d\hat\mu$ is supported only on Schr\"odinger representations acting on $L^2(\R)$,  henceforth, with abuse of notation, we let $\hat\bH^1\simeq \bR\setminus\{0\}$.
Under this identification, the non-commutative Fourier transform of a sufficiently regular function (say, $f\in L^1(\bH^1)\cap L^2(\H^1)$) is the operator-valued map
\begin{equation}
  \hat f^\lambda = \int_{\bH^1} f(p)\,\kX^\lambda_{p^{-1}}\,dp, \qquad\lambda\in\bR\setminus\{0\}.
\end{equation}
In particular, the above defines an Hilbert-Schmidt operator on $L^2(\bR)$ for any $\lambda\neq 0$. Explicitly computing the Plancherel measure $d\hat\mu$ yields
\begin{equation}\label{eq:plancherel}
  \|f\|_{L^2(\bH^1)}^2 = \int_{\bR} \|\hat f^\lambda \|^2_{\operatorname{HS}(L^2(\bR))}\,\frac{|\lambda|}{4\pi}d\lambda, \qquad f\in L^1(\bH^1)\cap L^2(\bH^1).
\end{equation}
It is then standard to extend the above to $f\in L^2(\bH^1)$.

\begin{rem}
  Some differences in the explicit computations of the rest of the section with respect to \cite{Stein1993, Bahouri2016,Bahouri2017} are due to a different choice of group law.
  Indeed, in these papers the following law is considered
\begin{equation}
  (x,y,z)\star (x',y', z') = (x+x',y+y',z+z'-2(xy'-x'y)).
\end{equation}
One can check that $(\bH^1,*)\simeq (\bH^1,\star)$ via the group isomorphism $\Phi(x,y,z)=(\frac x2,-\frac y2,z)$.
\end{rem}

We now let $\cS(\bH^1)=\cS(\R^3)$ be the class of Schwarz functions on $\bH^1$, which are defined as in the Euclidean case.
Direct computations show that, for $f\in \cS(\H^1)$, the action of the Fourier transform of $\Delta_\H f$ on functions  $u\in L^2(\R)$ reduces to
\begin{equation}
  \widehat{(\Delta_\H f)}^\lambda u(\xi) =
 (\partial_\xi^2-\lambda^2 \xi^2)\hat f^\lambda u(\xi), \qquad  \, \xi \in \R.
\end{equation}
For $\lambda\neq0$, an orthonormal basis of $L^2(\bR)$ eigenfunctions for the rescaled harmonic oscillator appearing above is given by the rescaled Hermite functions $(H_{n,\lambda})_{n\in\N}$, which satisfy 
\begin{equation}\label{eq:diagl}
  -\hat\Delta_\H^\lambda H_{n,\lambda} = (2n+1)|\lambda|H_{n,\lambda}, \quad n\in\N. 
\end{equation}
These are defined by $H_{n,\lambda}(\xi) = |\lambda|^{1/4}H_n(|\lambda|^{1/2}\xi)$, where $(H_n)_{n\in\N}$ are the standard Hermite functions.
Motivated by these facts, we define
\begin{equation}\label{eq:tilde-f}
  \tilde f^\lambda (n,m) = (\hat f^\lambda H_{m,\lambda}\mid H_{n,\lambda})_{L^2(\bR)} = \int_{\bH} f(p)\kX_{p^{-1}}(n,m,\lambda)\,dp, \quad n,m\in\bN,
\end{equation}
where we let $p\mapsto \kX_{p}(n,m,\lambda) = (\kX_{p}^\lambda H_{m,\lambda}\mid H_{n,\lambda})_{L^2(\bR)}$ be the coefficient of the representation $\kX^\lambda$ w.r.t.\ $H_{n,\lambda}$ and $H_{m,\lambda}$. 

Let $\tilde \bH^1 = \bN^2\times(\bR\setminus\{0\})$, endowed with the measure $d\tilde w$ defined by
  \begin{equation}
    \int_{\tilde\bH^1} \theta(\tilde w)\,d\tilde w = \sum_{n,m\in\bN} \int_{\bR} \theta(n,m,\lambda)\,\frac{|\lambda|}{4\pi}\,d\lambda,\qquad \tilde w=(n,m,\lambda).
  \end{equation}
  It can be shown, cf.~\cite{Bahouri2016}, that defining $\tilde \cF(f)(n,m,\lambda)=\tilde f^\lambda(n,m)$, yields an isometry $\tilde \cF:L^2(\bH^1)\to L^2(\tilde\bH^1)$.
  In particular,  \eqref{eq:plancherel} is recasted to
\begin{equation}\label{eq:plancherel-tilde}
   \|\tilde{\mathcal{F}}(f)\|_{L^2(\tilde \bH^1)} = \|f\|_{L^2(\bH^1)}, \qquad f\in L^2(\bH^1).
\end{equation}
 In \cite{Bahouri2016,Bahouri2017}, the authours endow $\tilde \H^1$ with a metric, allowing them to define the Schwartz class $\cS(\tilde\H^1)$ and thus the class of tempered distributions $\cS'(\tilde\H^1)$. Similarly to what happens for the standard Fourier transform, it can then be shown that $\tilde\cF$ is a continuous isomorphism between the classes $\cS(\bH^1)$ and $\cS(\tilde\bH^1)$. This allows to extend $\tilde\cF$ to tempered distributions on $\bH^1$, i.e., elements of $\cS'(\bH^1)$, via the following relation
\begin{equation}\label{eq:def-distr}
  \langle \tilde\cF T, \theta\rangle_{\cS'(\tilde\bH^1)} =   \langle T, \tilde\cF^* \theta\rangle_{\cS'(\bH^1)},\qquad \theta\in \cS(\tilde \bH^1),\ T\in \cS'(\bH^1).
\end{equation}
Here $\langle\cdot,\cdot\rangle$ denotes the duality, and $\tilde\cF^*$  is obtained by computing the above for $T\in \cS({\bH}^1)\subset \cS'(\H^1)$. Namely, this yields
\begin{equation}\label{eq:cf-star}
  [\tilde\cF^*\theta](p) = \int_{\tilde \H^1} \theta(\tilde w)\,\kX_{p^{-1}}(\tilde w)\,d\tilde w, \qquad p\in \bH^1.
\end{equation}

We then have the following.
\begin{prop}\label{prop:delta-fourier}
  Let $\delta_0$ be the Dirac distribution on $\bH^1$, centered at the origin. Then, for any multi-index $\alpha\in\bN^3$, and $\lambda\neq 0$ we have 
  	\begin{equation}
  		\tilde \delta_0^\lambda = \idty
    \quad \text{and}\quad
        \widetilde{[D^\alpha \delta_0]}^\lambda = |\lambda|^{{(\alpha_1+\alpha_2)}/{2}} \lambda^{\alpha_3} B_{\alpha},
  	\end{equation}
  	where 
	$B_{\alpha}$ is a non-zero operator on $\ell^2(\N)\times \ell^2(\N)$, such that $B_\alpha(n,m)=0$ if $|n-m|>\alpha_1+\alpha_2$.
\end{prop}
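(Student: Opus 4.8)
The plan is to compute the two Fourier transforms directly from the definition \eqref{eq:def-distr}--\eqref{eq:cf-star}, using that $\tilde\cF$ on the Dirac mass and its derivatives can be evaluated by pairing against Schwartz test functions on $\tilde\bH^1$. For the first identity, note that from \eqref{eq:cf-star} one has $[\tilde\cF^*\theta](0) = \int_{\tilde\bH^1}\theta(\tilde w)\,\kX_{0^{-1}}(\tilde w)\,d\tilde w$, and since $0^{-1}=0$, the representation coefficient reduces to $\kX_0^\lambda H_{m,\lambda} = H_{m,\lambda}$ (the identity operator, as $\kX^\lambda_0 = \idty$ on $L^2(\bR)$), so $\kX_0(n,m,\lambda) = (H_{m,\lambda}\mid H_{n,\lambda})_{L^2(\bR)} = \delta_{nm}$. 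Hence $\langle \tilde\delta_0,\theta\rangle = \langle\delta_0,\tilde\cF^*\theta\rangle = [\tilde\cF^*\theta](0) = \sum_{n}\int_\bR \theta(n,n,\lambda)\tfrac{|\lambda|}{4\pi}d\lambda$, which is exactly the pairing of $\theta$ with the operator-valued function $\lambda\mapsto\idty$. This gives $\tilde\delta_0^\lambda=\idty$.

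For the derivatives, the key is the intertwining property of the Schr\"odinger representation: the left-invariant vector fields $X_\H, Y_\H, Z_\H$ act on representation coefficients as the infinitesimal generators $d\kX^\lambda(X_\H), d\kX^\lambda(Y_\H), d\kX^\lambda(Z_\H)$, which (from \eqref{eq:schrodinger}) are, up to constants, the operators $-\partial_\xi$, $-i\lambda\xi$, and $i\lambda$ on $L^2(\bR)$. I would first express the Euclidean derivatives $\partial_x,\partial_y,\partial_z$ as combinations of $X_\H, Y_\H, Z_\H$ with polynomial coefficients in $x,y$ (inverting \eqref{eq:vf}: $\partial_x = X_\H + \tfrac y2 Z_\H$, $\partial_y = Y_\H - \tfrac x2 Z_\H$, $\partial_z = Z_\H$), so that $\widetilde{[D^\alpha\delta_0]}^\lambda$ becomes a polynomial in the operators $a_\lambda^\pm$ (creation/annihilation-type operators for the rescaled harmonic oscillator) applied to $\idty = \tilde\delta_0^\lambda$. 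Concretely, using $\langle \widetilde{[D^\alpha\delta_0]}, \theta\rangle = \langle D^\alpha\delta_0, \tilde\cF^*\theta\rangle = (-1)^{|\alpha|} D^\alpha[\tilde\cF^*\theta](0)$ and differentiating \eqref{eq:cf-star} under the integral, one reduces everything to the action of $d\kX^\lambda$ on the coefficient $p\mapsto\kX_{p^{-1}}(n,m,\lambda)$ evaluated at $p=0$. The homogeneity of $\kX^\lambda$ under dilations — each $\partial_x$ or $\partial_y$ contributes a factor $|\lambda|^{1/2}$ and each $\partial_z$ a factor $\lambda$, matching the weights $1,1,2$ of the anisotropic dilation \eqref{eq:dilation} — gives the scalar prefactor $|\lambda|^{(\alpha_1+\alpha_2)/2}\lambda^{\alpha_3}$, and the remaining operator $B_\alpha$ is $\lambda$-independent. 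The band structure $B_\alpha(n,m)=0$ for $|n-m|>\alpha_1+\alpha_2$ follows because each of $a_\lambda^\pm$ (the normalized versions of $\partial_\xi$ and $\xi$) shifts the Hermite index by exactly $\pm 1$, so a product of $\alpha_1+\alpha_2$ of them moves the index by at most $\alpha_1+\alpha_2$, while the $Z_\H$-factors act diagonally. Non-vanishing of $B_\alpha$ follows since the top-order term (all shifts in the same direction) survives.

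The main obstacle I expect is bookkeeping rather than conceptual: carefully justifying the differentiation under the integral sign in \eqref{eq:cf-star} in the distributional sense (this is where the continuity of $\tilde\cF^*$ from $\cS(\tilde\bH^1)$ to $\cS(\bH^1)$, cited from \cite{Bahouri2016,Bahouri2017}, is used), and then tracking the constants and the precise algebra of how the polynomial coefficients $x,y$ in $\partial_x,\partial_y$ interact with $Z_\H$ after passing to operators — i.e., ordering issues coming from $[X_\H,Y_\H]=Z_\H$. A clean way to organize this is to work entirely on the representation side: since $\widetilde{fg}$ and derivatives correspond to composition with $d\kX^\lambda$, I would establish once and for all that $d\kX^\lambda(X_\H)$, $d\kX^\lambda(Y_\H)$ are, up to $|\lambda|^{1/2}$, the standard ladder operators on Hermite functions, and $d\kX^\lambda(Z_\H) = i\lambda\,\idty$, then read off both the scaling prefactor and the band width from the ladder-operator calculus. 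I would not grind through the explicit form of each $B_\alpha$, as only the stated qualitative properties are needed downstream.
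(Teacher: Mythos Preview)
Your proposal is correct and follows essentially the same route as the paper: both reduce to computing $D^\alpha\big(p\mapsto\kX_{p^{-1}}(n,m,\lambda)\big)\big|_{p=0}$ and then read off the scaling and band structure from the ladder-operator/Hermite-recurrence action on $L^2(\bR)$. The paper is slightly more direct in that it differentiates the explicit formula \eqref{eq:schrodinger} in the Euclidean variables from the outset---so the detour through $X_\H,Y_\H,Z_\H$ and the attendant ordering issues you flag never arises---and obtains the $|\lambda|^{1/2}$ and $\lambda$ factors straight from the recurrences for $\xi H_{m,\lambda}$ and $H_{m,\lambda}'$ rather than via dilation homogeneity.
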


\begin{proof}
  By \eqref{eq:def-distr}, for any $\theta\in \cS(\tilde \bH^1)$, we have $\langle \tilde{\delta}_0,\theta\rangle_{\cS'(\tilde\bH^1)} =\cF^*\theta(0)$. 
  Thus, in order to prove the first part of the statement it suffices to show that
  \begin{equation}
    \tilde\cF^* \theta(0)=\int_{\tilde\bH^1} \theta(n,m,\lambda)\,\delta_{n,m}\,d(n,m,\lambda),
  \end{equation}
  where $\delta_{n,m}$ is the Kroenecker delta. This follows at once from \eqref{eq:cf-star} and the fact that $\kX_0(n,m,\lambda) = (H_{m,\lambda}\mid H_{n,\lambda})= \delta_{n,m}$.

  We now turn to the second part of the statement. Let us remark that,  \begin{equation}\label{eq:P}
    \begin{split}
          \left\langle \widetilde{[D^\alpha \delta_0]}, \theta \right\rangle_{\cS'(\tilde\bH^1)}
    &= (-1)^{|\alpha|}\left\langle  \delta_0, D^\alpha \tilde\cF^*\theta \right\rangle_{\cS'(\bH^1)}\\
    &= (-1)^{|\alpha|}D^\alpha\tilde\cF^*\theta(0) \\
    &= (-1)^{|\alpha|}\int_{\tilde\bH^1} \theta(\tilde w) D^\alpha\left(p\mapsto \kX_{p^-1}(\tilde w)\right)|_{p=0}\,d\tilde w.
    \end{split}
  \end{equation}
To prove the statement, letting $\tilde B_\alpha(n,m,\lambda) = D^\alpha (p\mapsto \kX_{p^{-1}}(n,m,\lambda)|_{p=0}$ we show that $\tilde B_\alpha(n,m,\lambda)= |\lambda|^{{(\alpha_1+\alpha_2)}/{2}} \lambda^{\alpha_3} B_{\alpha}$ for a non-zero operator $B_\alpha$ and that $\tilde B_\alpha(n,m,\lambda)=0$  if $|m-n|>\alpha_1+\alpha_2$.
  By \eqref{eq:schrodinger} and the fact that $\kX_0(n,m,\lambda)=\delta_{nm}$, 
  for any $\alpha_3\in\N$ we have
  \begin{equation}\label{eq:alpha3}
        \tilde B_{(0,0,\alpha_3)}(n,m, \lambda) = (-i\lambda)^{\alpha_3}\kX_{0}(\tilde w) = (-i\lambda)^{\alpha_3} \delta_{n,m}, \quad n,m\in\N.
  \end{equation}   
  This proves the statement for $\alpha=(0,0,\alpha_3)\in \N^3$, for $B_{(0,0,\alpha_3)}=(-i)^{\alpha_3}\delta_{nm}$.
  
  Recall the recurrence relation for Hermite functions
  \begin{equation}
    \lambda\xi H_{m,\lambda}(\xi) = |\lambda|^{1/2}\left(\sqrt{\frac m2} H_{m-1,\lambda}(\xi)  + \sqrt{\frac {m+1}2} H_{m+1,\lambda}(\xi)\right).
  \end{equation}
  This yields that
  \begin{equation}
    \label{eq:YY}
    \tilde B_{(0,1,0)}(n,m, \lambda) = i |\lambda|^{1/2}\left(\sqrt{\frac {n+1}2}\delta_{n+1,m}+\sqrt{\frac{n}2} \delta_{n,m+1}\right), \quad n,m\in\N.
  \end{equation}
  By induction, it is then easy to see that
  \begin{equation}\label{eq:B-comp}
    \tilde B_{(0,\alpha_2,\alpha_3)} = \underbrace{\tilde B_{(0,1,0)}\circ \cdots \circ \tilde B_{(0,1,0)} }_{\alpha_2 \text{ times}}\circ \tilde B_{(0,0,\alpha_3)}.
  \end{equation}
Notice that, by \eqref{eq:YY}, the composition of  $\alpha_2$ copies of $\tilde B_{(0,1,0)}$ is $|\lambda|^{\alpha_2/2}$ times a non-zero operator for any $\alpha_2\in\N$. Moreover, since $\tilde B_{(0,0,\alpha_3)}$ is diagonal, while in $\tilde B_{(0,1,0)}$ the only non-zero elements are the upper and lower diagonal ones, we obtain at once that for any $\alpha_2,\alpha_3\in\mathbb N$ we have
$\tilde B_{(0,\alpha_2,\alpha_3)}(n,m, \lambda)=0$ if $|n-m|>\alpha_2$.
  
  The proof for the case $\alpha=(\alpha_1,\alpha_2,\alpha_3)$ uses similar arguments and the observation that, due to the recurrence relation for Hermite functions
  \begin{equation}
    H'_{m,\lambda}(\xi) =  |\lambda|^{1/2} \left(\sqrt{\frac m2} H_{m-1,\lambda}(\xi) - \sqrt{\frac {m+1}2} H_{m+1,\lambda}(\xi)\right),
  \end{equation}
  we have that
  \begin{equation}
    \label{eq:XX}
    \tilde B_{(1,0,0)}(n,m,\lambda) = |\lambda|^{1/2}\left(\sqrt{\frac {n+1}2}\delta_{n+1,m}-\sqrt{\frac{n}2} \delta_{n,m+1}\right).\qedhere
  \end{equation}
\end{proof}

 \subsubsection{Proof of Theorem \ref{thm:Heis}}\label{sss:Heis}
 Recall that $H=-\Delta_\H$, $\dom{H}=C^\infty_0(\R^3)$ is real and essentially self-adjoint. It is easy to check that $\cS(\R^3)\subset \dom(\bar H)$. We then let $A$ be the real essentially self-adjoint operator obtained by restricting $\bar H$ to $\cS(\R^3)$. 
  Moreover, by smoothness of $X_\H$ and $Y_\H$ it holds $\operatorname{rng} A\subset \cS(\R^3)$. 

 In view of the above, we apply Lemma~\ref{lem:Pavlov} to $A_0=-\Delta_\H$, $\dom(A_0)=C^\infty_0(\R^3\setminus\{0\})$. 
 Assume by contradiction that there exists $\theta\in \cK_-(A_0)$, $\theta\not\equiv 0$. 
Then, there exist non-identically zero coefficients $(c_\alpha)_{|\alpha|\le N}\subset \C$, such that 
\begin{equation}
  (-\Delta_\H+i) \theta = \sum_{|\alpha|\le N}c_\alpha D^\alpha\delta_0.
\end{equation}
We apply $\tilde \cF$ on both sides.
  By \eqref{eq:diagl} we have
  \begin{equation}
    \label{eq:sublap-ft}
    \widetilde{[(-\Delta_\H+i) \theta]}^\lambda(n,m) = \left(|\lambda|(2n+1) +i\right)\tilde \theta^\lambda(n,m).
  \end{equation}
By Proposition~\ref{prop:delta-fourier} this yields
\begin{equation}
  \tilde \theta^\lambda(n,m) = \frac{Q_{n,m}(|\lambda|^{1/2})}{|\lambda|(2n+1)+i} 
  \quad \text{where}\quad
  Q_{n,m}(\lambda) = \sum_{|\alpha|\le N} c_\alpha |\lambda|^{{(\alpha_1+\alpha_2)}/{2}} \lambda^{\alpha_3} B_{\alpha}(n,m).
\end{equation}

We now claim that there exists $n_0,m_0\in \N$, $\Lambda>0$, and $C>0$ such that
\begin{equation}\label{eq:claim1}
  |\tilde\theta^\lambda(n_0,m_0)|^2 \ge \frac{C}{|\lambda|^2(2n+1)^2+1}, \qquad \forall |\lambda|>\Lambda.
\end{equation}
Indeed, it is immediate to guarantee the existence of $n_0,m_0\in\N$ such that $Q_{n_0,m_0}\not\equiv 0$, since otherwise $\tilde \theta^{\lambda}(n,m)=0$ for all $(\lambda,n,m)\in\tilde\H^1$ and hence $\theta\equiv 0$.  
 In this case $Q_{n_0,m_0}$ is a non-zero polynomial in $|\lambda|^{1/2}$ for $\lambda>0$ (resp.\ $\lambda<0$). This yields at once the existence of $C,\Lambda>0$ such that $|Q_{n_0,m_0}(\lambda)|\ge C$ for $\lambda>\Lambda$, and hence the claim.
 
By the previous claim and \eqref{eq:plancherel-tilde} we finally obtain that
\begin{equation}
  \|\theta\|^2_{L^2(\bH^1)} = \|\tilde\theta\|^2_{L^2(\tilde\bH^1)} \ge \int_{|\lambda|>\Lambda} \frac{C}{|\lambda|^2(2n_0+1)^2+1}\,\frac{|\lambda|}{4\pi}d\lambda \gtrsim \int_{1}^{+\infty} \frac{d\lambda}{|\lambda|} = +\infty,
\end{equation}
which is a contradiction.
This implies that $\cK_-(A_0)=\{0\}$ and, since $A_0$ is non-negative, that $A_0$ is essentially self-adjoint. (See, e.g., \cite[Thm.~X.I and Corollary]{RS2})

The statement follows by observing that the essential self-adjointness of $A_0$ implies the essential self-adjointness of $\pointH$, since $\dom{A_0}\subset \dom{\pointH}$.
\qed

\subsection{Heisenberg pointed sub-Laplacian on domains}

In this section, we localize Theorem~\ref{thm:Heis}, by proving the following result.

\begin{thm}\label{thm:heis-delta-local}
  Let $\Omega\subset\H^1$ be an open set with smooth boundary. Then, for any $p\in\Omega$, the set of self-adjoint extensions of $\pointH = -\Delta_\H$ with domain $\dom(\pointH)=C^\infty_0(\Omega\setminus\{p\})$ coincides with the one of $H = -\Delta_\H$ with domain $\dom(H)=C^\infty_0(\Omega)$.
\end{thm}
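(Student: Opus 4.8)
The plan is to reduce Theorem~\ref{thm:heis-delta-local} to Theorem~\ref{thm:Heis} via the criterion of Proposition~\ref{prop:only-closure}, i.e., it suffices to prove that $H^2_0(\Omega) = H^2_0(\Omega\setminus\{p\})$ for every smooth open $\Omega\subset\H^1$ and every $p\in\Omega$. After translating (by left-invariance of $\Delta_\H$) we may assume $p=0$. The nontrivial inclusion is $H^2_0(\Omega)\subset H^2_0(\Omega\setminus\{0\})$: we must show that any $u\in H^2_0(\Omega)$ can be approximated in the $H^2$-norm \eqref{eq:Sobolev} by functions in $C^\infty_0(\Omega\setminus\{0\})$. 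First I would localize: using Lemma~\ref{lem:union-sobolev} with $\Omega_1 = \Omega\setminus \bar B_{\varepsilon/2}(0)$ and $\Omega_2 = B_\varepsilon(0)$ (for $\varepsilon$ small enough that $\bar B_\varepsilon(0)\subset\Omega$), together with the intersection property \eqref{eq:intersection}, it is enough to treat the case $\Omega = B_\varepsilon(0)$, or even $\Omega = \H^1$ itself after extending by zero (Proposition~\ref{prop:brezis}). So the heart of the matter is: $H^2_0(\H^1) = H^2_0(\H^1\setminus\{0\})$.

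This last equality is exactly the statement that $0$ is a removable singularity for the domain of the closure of $\Delta_\H$, which is equivalent to the essential self-adjointness statement of Theorem~\ref{thm:Heis} combined with Theorem~\ref{t-gaffney}-type completeness of $\H^1$: indeed $\Delta_\H$ on $C^\infty_0(\H^1)$ is essentially self-adjoint (Strichartz, since $\H^1$ is complete) with domain of closure $H^2_0(\H^1)$, while $\Delta_\H$ on $C^\infty_0(\H^1\setminus\{0\})$ is essentially self-adjoint by Theorem~\ref{thm:Heis} with domain of closure $H^2_0(\H^1\setminus\{0\})$. I would then argue that these two self-adjoint operators coincide: the second is a symmetric extension of neither smaller than nor larger than... — more precisely, $\overline{\Delta_\H|_{C^\infty_0(\H^1\setminus\{0\})}} \subset \overline{\Delta_\H|_{C^\infty_0(\H^1)}}$ since the smaller form-core sits inside the larger one, and an inclusion of self-adjoint operators forces equality. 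Hence their domains, $H^2_0(\H^1\setminus\{0\})$ and $H^2_0(\H^1)$, agree.

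Carrying this back: for general smooth $\Omega$ and $p\in\Omega$, pick $\varepsilon>0$ with $\bar B_\varepsilon(p)\subset \Omega$. By Lemma~\ref{lem:union-sobolev} (applicable since $\partial(\Omega\setminus\bar B_{\varepsilon/2}(p))\cap\partial B_\varepsilon(p) = \varnothing$ and the overlap is relatively compact) we have $H^2_0(\Omega) = H^2_0(\Omega\setminus\bar B_{\varepsilon/2}(p)) + H^2_0(B_\varepsilon(p))$, and the analogous decomposition holds for $\Omega\setminus\{p\}$ with the second summand replaced by $H^2_0(B_\varepsilon(p)\setminus\{p\})$. So it remains to prove $H^2_0(B_\varepsilon(p)) = H^2_0(B_\varepsilon(p)\setminus\{p\})$; after translation this is $H^2_0(B_\varepsilon(0)) = H^2_0(B_\varepsilon(0)\setminus\{0\})$, and by extension-by-zero (Proposition~\ref{prop:brezis}, identifying these spaces with subspaces of $H^2_0(\H^1)$ resp. $H^2_0(\H^1\setminus\{0\})$ supported in $\bar B_\varepsilon(0)$) this follows from the global equality $H^2_0(\H^1) = H^2_0(\H^1\setminus\{0\})$ established above. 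Finally invoke Proposition~\ref{prop:only-closure} to conclude that the self-adjoint extensions of $-\Delta_\H$ with domain $C^\infty_0(\Omega\setminus\{p\})$ coincide with those with domain $C^\infty_0(\Omega)$.

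The main obstacle I anticipate is the bookkeeping in the reduction step — making sure Lemma~\ref{lem:union-sobolev} and Proposition~\ref{prop:brezis} can genuinely be combined to localize the singularity without losing the smooth-boundary hypotheses they require (one must choose the auxiliary balls and their boundaries to be smooth and in "general position" with $\partial\Omega$), and that extension-by-zero is compatible with the $H^2_0$ identification across all the sets involved. The analytic input — that removing the single point $0$ does not enlarge the $H^2_0$-closure — is entirely contained in Theorem~\ref{thm:Heis}; everything else is a soft functional-analytic gluing argument.
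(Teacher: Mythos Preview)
Your approach is essentially the paper's: reduce via Proposition~\ref{prop:only-closure}, translate to $p=0$, localize with Lemma~\ref{lem:union-sobolev}, and draw the global equality $H^2_0(\H^1)=H^2_0(\H^1\setminus\{0\})$ from Theorem~\ref{thm:Heis} (via the ``inclusion of self-adjoint operators forces equality'' observation, which the paper leaves implicit). The only divergence is in passing from this global equality back down to $H^2_0(U_\varepsilon)=H^2_0(U_\varepsilon\setminus\{0\})$: your appeal to Proposition~\ref{prop:brezis} runs into exactly the smooth-boundary issue you flagged (since $U_\varepsilon\setminus\{0\}$ has an isolated boundary point), and the paper sidesteps it by applying Lemma~\ref{lem:union-sobolev} a second time to write $H^2_0(\H^1\setminus\{0\})=H^2_0(U_\varepsilon\setminus\{0\})+H^2_0(\H^1\setminus U_{\varepsilon/2})$, then using the intersection identity \eqref{eq:intersection} to force the outer piece of a given $u\in H^2_0(U_\varepsilon)$ into $H^2_0(U_\varepsilon\setminus U_{\varepsilon/2})\subset H^2_0(U_\varepsilon\setminus\{0\})$ --- precisely the bookkeeping you anticipated needing.
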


\begin{proof}
  By Proposition~\ref{prop:only-closure}, it suffices to prove that  $H^2_0(\Omega)=H^2_0(\Omega\setminus\{p\})$.
Thanks to the invariance under left translation of $\Delta_\H$ we can assume $p=0$. Moreover, for any $\varepsilon>0$ we let $U_\varepsilon\subset \R^3$ to be the Euclidean ball of radius $\varepsilon$ centered at the origin.

  By Lemma~\ref{lem:union-sobolev}, for any $\varepsilon>0$ sufficiently small, we have that
  \begin{equation}\label{eq:omega-split}
    H^2_0(\Omega) = H^2_0(U_\varepsilon)+H^2_0(\Omega\setminus U_{\varepsilon/2}) 
    \quad\text{and}\quad
    H^2_0(\Omega\setminus\{0\}) = H^2_0(U_\varepsilon\setminus\{0\})+H^2_0(\Omega\setminus U_{\varepsilon/2}).
  \end{equation}
  Thus, we are reduced to show that $H^2_0(U_\varepsilon)=H^2_0(U_\varepsilon\setminus\{0\})$.

  Since the other inclusion is obvious, let us prove that $H^2_0(U_\varepsilon)\subset H^2_0(U_\varepsilon\setminus\{0\})$. To this aim, we consider $u\in H^2_0(U_\varepsilon)$ and a sequence  $(u_n)_n\subset C^\infty_0(U_\varepsilon)$ such that $u_n\rightarrow u$ in the $H^2$ norm. Observe that, by Theorem~\ref{thm:Heis} and Lemma~\ref{lem:union-sobolev}, we have
  \begin{equation}\label{eq:implies}
    H^2_0(\bH^1) =H^2_0(\bH^1\setminus\{0\})= H^2_0(U_\varepsilon\setminus\{0\})+H^2_0(\bH^1\setminus U_{\varepsilon/2}).
  \end{equation}
Hence, since  $u\in H^2_0(U_\varepsilon)\subset H^2_0(\bH^1)$, \eqref{eq:implies} implies that there exist two sequences $(u^{(1)}_n)_n\subset  C^\infty_0(U_\varepsilon\setminus\{0\})$ and $(u^{(2)}_n)_n\subset  C^\infty_0(\bH^1\setminus U_{\varepsilon/2})$ which converge in the $H^2$ norm respectively to $u^{(1)},u^{(2)}$ where $u^{(1)}+u^{(2)}=u$. Thus, the sequence $u_n-u^{(1)}_n\subset C^\infty_0(U_\varepsilon)$ converges to $u^{(2)}$ in the $H^2$ norm, and hence $u^{(2)}\in H^2_0(U_\varepsilon)\cap H^2_0(\bH^1\setminus U_{\varepsilon/2})=H^2_0(U_\varepsilon\setminus U_{\varepsilon/2})$. Here, the last equality follows by \eqref{eq:intersection}. As a consequence, we can assume $(u^{(2)}_n)_n\subset C^\infty_0(U_\varepsilon\setminus U_{\varepsilon/2})$. Finally, we have shown that the sequence $u^{(1)}_n+u^{(2)}_n$ is contained in $C^\infty_0(U_\varepsilon\setminus\{0\})$,  and satisfies $\lim_n (u^{(1)}_n+u^{(2)}_n)=u$. This completes the proof.
\end{proof}

\section{Essential self-adjointness of 3D pointed sub-Laplacians}\label{s:nilpot}

Let $M$ be a $3$-dimensional genuine sub-Riemannian manifold, endowed with a smooth and positive measure $\omega$.  Let $p\in M$ be a regular point, and $\{X_1,X_2\}$ be a local orthonormal frame for the sub-Riemannian structure in $U\subset M$, $p\in U$. (See Remark~\ref{r:orthonormal}.) By \eqref{eq:sub-lapl}, we have
\begin{equation}\label{eq:delta-omega-3d}
  \Delta_\omega = X_1^2+X_2^2  + X_0, \quad \text{where}\quad  X_0 = \dive_\omega(X_1)X_1+\dive_\omega(X_2)X_2.  
\end{equation}

The purpose of this section is to prove the following. 
\begin{thm}\label{thm:3d-delta}
  The set of self-adjoint extensions of $-\Delta_\omega$ with domain $C^\infty_0(M\setminus\{p\})$ coincides with the one with domain $C^\infty_0(M)$.
\end{thm}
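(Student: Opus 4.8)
The strategy is to reduce the statement to the Heisenberg case, Theorem~\ref{thm:heis-delta-local}, via a local normal form for 3D contact sub-Riemannian structures. By Proposition~\ref{prop:only-closure}, it suffices to show that $H^2_0(M)=H^2_0(M\setminus\{p\})$, where $H^2_0$ is defined with respect to $\Delta_\omega$ and $\omega$. Using Lemma~\ref{lem:union-sobolev} (with $\Omega_1$ a small coordinate ball $U'$ around $p$ and $\Omega_2 = M\setminus \bar U''$ for a slightly smaller ball $U''$), one splits $H^2_0(M)=H^2_0(U')+H^2_0(M\setminus \bar U'')$ and similarly with $p$ removed, exactly as in the proof of Theorem~\ref{thm:heis-delta-local}. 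This reduces the problem to the purely local statement $H^2_0(U')=H^2_0(U'\setminus\{p\})$ for a sufficiently small neighborhood $U'$ of $p$.

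\textbf{Local normal form.} Since $p$ is a contact (regular) point, there is a standard result (e.g.\ via privileged coordinates / the Darboux-type normal form for contact structures in dimension $3$, cf.\ \cite{ABB, Montgomery2002}) giving coordinates $(x,y,z)$ centered at $p$ in which the local orthonormal frame $\{X_1,X_2\}$ agrees with the Heisenberg frame $\{X_\H, Y_\H\}$ up to higher-order terms; more precisely, one can write $X_i = X_i^\H + R_i$ where $R_i$ are smooth vector fields vanishing to appropriate order at the origin, and $\omega = a(q)\,dx\,dy\,dz$ with $a$ smooth and positive. The key point is that the $H^2_0$ norm defined by $\Delta_\omega$ on a small ball is \emph{equivalent} to the one defined by $\Delta_\H$ on that ball: indeed $\|u\|_{H^2(U')}^2 = \|u\|_{L^2}^2 + \|\Delta_\omega u\|_{L^2}^2$, and writing $\Delta_\omega - \Delta_\H$ as a second-order operator with coefficients that are small (in sup norm) on $U'$, one uses the subelliptic estimate of Proposition~\ref{prop:sub-ell} to absorb the difference: on a sufficiently small neighborhood the perturbation of the principal part is controlled by $\varepsilon(\sum_{i,j}\|X_i^\H X_j^\H u\|_{L^2}^2 + \|\nabla_\H u\|_{L^2}^2) \le \varepsilon C\|u\|_{H^2_\H(U')}^2$, and the measure density $a$ bounded away from $0$ and $\infty$ makes the $L^2(U',\omega)$ and $L^2(U',\mathcal L^3)$ norms equivalent. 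Hence the completions of $C^\infty_0(U')$ (resp.\ $C^\infty_0(U'\setminus\{p\})$) coincide as sets whether one uses $\Delta_\omega$ or $\Delta_\H$.

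\textbf{Conclusion.} Once the norm equivalence is established, apply Theorem~\ref{thm:heis-delta-local} (or rather its proof, which shows $H^2_0(U_\varepsilon) = H^2_0(U_\varepsilon\setminus\{0\})$ for small Euclidean balls in $\H^1$): this gives $H^2_{0,\H}(U')=H^2_{0,\H}(U'\setminus\{p\})$, and by the equivalence of norms $H^2_{0,\omega}(U')=H^2_{0,\omega}(U'\setminus\{p\})$. Feeding this back through the Lemma~\ref{lem:union-sobolev} decomposition yields $H^2_0(M)=H^2_0(M\setminus\{p\})$, and Proposition~\ref{prop:only-closure} finishes the proof.

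\textbf{Main obstacle.} The delicate step is the norm equivalence / perturbation argument: one must be careful that the lower-order and perturbed principal-order terms of $\Delta_\omega-\Delta_\H$ are genuinely controlled on a \emph{fixed} small ball with a constant $<1$ in front of the full $H^2_\H$-norm (so that $\Delta_\omega u$ and $\Delta_\H u$ have comparable $L^2$ norms modulo lower-order terms). This requires the full strength of the subelliptic a priori estimate of Proposition~\ref{prop:sub-ell} and a careful bookkeeping of the vanishing orders of the remainder vector fields $R_i$ in privileged coordinates; the fact that $p$ is a contact point is exactly what guarantees the normal form has $\{X_\H,Y_\H\}$ as its nilpotent approximation and hence that the subelliptic estimates for $\Delta_\H$ transfer.
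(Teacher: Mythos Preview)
Your global architecture is exactly the paper's: reduce via Proposition~\ref{prop:only-closure} to $H^2_0(M)=H^2_0(M\setminus\{p\})$, localize with Lemma~\ref{lem:union-sobolev}, and settle the local identity $H^2_0(U_\varepsilon)=H^2_0(U_\varepsilon\setminus\{p\})$ by comparing with the Heisenberg case. The gap is in the local step, and it is precisely the point you flag as the ``main obstacle''.

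You invoke privileged coordinates and write $X_i=X_i^\H+R_i$ with $R_i$ vanishing at the origin, then argue by perturbation that the $H^2$ norms for $\Delta_\omega$ and $\Delta_\H$ are equivalent because the coefficients of $\Delta_\omega-\Delta_\H$ are small on a small ball. This does not work as stated: in generic privileged coordinates the remainders $R_i$ have a nontrivial $\partial_z$-component, so the \emph{distributions} $\operatorname{span}\{X_1,X_2\}$ and $\operatorname{span}\{X_\H,Y_\H\}$ differ, and then the associated horizontal Sobolev spaces are genuinely different. Concretely, $\Delta_\omega-\Delta_\H$ contains terms of the form $c(q)\,\partial_z^2$ with $c$ small; but $\partial_z^2$ is fourth-order horizontal for $\H^1$ and is \emph{not} controlled by $\|u\|_{H^2_\H}$ via Proposition~\ref{prop:sub-ell}, so the absorption $\varepsilon\sum_{i,j}\|X_i^\H X_j^\H u\|^2$ you write down does not bound the perturbation.

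The paper avoids this by using a stronger normal form (Proposition~\ref{prop:zito}, due to Zhitomirskii): at a contact point one can choose coordinates in which the distribution is \emph{exactly} the Heisenberg distribution, i.e.\ $X_i=c_{i1}X_\H+c_{i2}Y_\H$ with $C=(c_{ij})$ smooth and invertible. No vanishing or smallness is involved; the coefficients are merely bounded. Then $\Delta_\H$ can be written as $\sum\alpha_{ij}X_iX_j+\sum\beta_iX_i$ with bounded $\alpha_{ij},\beta_i$, and Proposition~\ref{prop:sub-ell} applied to the $\{X_1,X_2\}$ frame gives $\|\Delta_\H u\|_{L^2}\lesssim\|u\|_{H^2_\omega}$ directly (and symmetrically in the other direction). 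Your sketch becomes correct once you replace ``privileged coordinates / nilpotent approximation'' by this Darboux-type normal form and drop the perturbation/smallness argument in favor of the bounded-coefficient one.
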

We remark that, since when $M$ is complete the sub-Laplacian is essentially self-adjoint, the above implies Theorem~\ref{t-principale}.


The idea of the proof is to show that, sufficiently near $p$, the Sobolev space $H^2_0$ associated with the sub-Laplacian \eqref{eq:delta-omega-3d} is equivalent to the one associated with the Heisenberg sub-Laplacian. This will then allow to exploit the results obtained in Theorem \ref{thm:heis-delta-local} locally around $p$. Finally, a localization argument completes the proof.

In order to go on with the above plan, we fix the following set of coordinates around $p$, for which we refer to \cite[Sec.~8.2]{Zhitomirskii1995}. 
We stress that the regularity of $p$ is essential for the existence of these coordinates.

\begin{prop}\label{prop:zito}
  There exists a local set of coordinates in a neighborhood $V$ around $p$ such that, denoting by $X_\bH$ and $Y_\bH$ the Heisenberg vector fields, there exists $C=(c_{ij})\in C^\infty(V,\operatorname{GL}_2(\R))$ such that $C^{-1}\in C^\infty(V,\operatorname{GL}_2(\R))$ and
  \begin{equation}
    X_1= c_{11}X_\bH+c_{12}Y_\bH, \qquad X_2 = c_{21} X_\bH + c_{22}Y_\bH.
  \end{equation}
\end{prop}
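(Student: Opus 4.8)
\textbf{Proof plan for Proposition~\ref{prop:zito}.} The plan is to produce, via the Darboux-type normal form for contact distributions in dimension $3$, a coordinate system in which the distribution $\distr = \operatorname{span}\{X_1,X_2\}$ coincides with the Heisenberg distribution $\operatorname{span}\{X_\bH,Y_\bH\}$. Once the distributions agree as subbundles of $TV$, the claim about the matrix $C$ is automatic: at each point $q\in V$ both $\{X_1(q),X_2(q)\}$ and $\{X_\bH(q),Y_\bH(q)\}$ are bases of the same $2$-plane $\distr_q$, so there is a unique invertible matrix $C(q)$ expressing the former in terms of the latter, and smoothness of $C$ and $C^{-1}$ follows from smoothness of all four vector fields together with Cramer's rule (the relevant $2\times 2$ determinant being nonvanishing on $V$).

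So the real content is the reduction of $\distr$ to Heisenberg form near the contact point $p$. First I would recall that, since $p$ is a regular (contact) point of the genuine $3$-dimensional structure, $\distr$ is a corank-$1$ distribution with $\distr^2_p = T_pM$; locally we may write $\distr = \ker \eta$ for a smooth $1$-form $\eta$ with $\eta\wedge d\eta \neq 0$ in a neighborhood of $p$ (nondegeneracy is exactly the contact condition $\distr^2 = TM$ at $p$, which persists on a neighborhood). This is precisely the hypothesis of the classical Darboux theorem for contact forms: there exist local coordinates $(x,y,z)$ centered at $p$ in which $\eta$ is a nonvanishing multiple of $dz - \tfrac{y}{2}\,dx + \tfrac{x}{2}\,dy$ (equivalently $dz + \tfrac12(x\,dy - y\,dx)$, up to the normalization used in \eqref{eq:vf}). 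In those coordinates $\ker\eta = \operatorname{span}\{\partial_x - \tfrac{y}{2}\partial_z,\ \partial_y + \tfrac{x}{2}\partial_z\} = \operatorname{span}\{X_\bH, Y_\bH\}$, so $\distr_q = \operatorname{span}\{X_\bH(q), Y_\bH(q)\}$ for all $q$ in this neighborhood $V$. This is the normal form to which the reference \cite[Sec.~8.2]{Zhitomirskii1995} gives the precise statement, so I would cite it for the existence of the coordinates and only sketch the Darboux argument.

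With the distribution matched, I would conclude as follows. Fix $q\in V$. Since $\{X_\bH(q),Y_\bH(q)\}$ spans $\distr_q$ and $X_1(q),X_2(q)\in\distr_q$, write $X_1 = c_{11}X_\bH + c_{12}Y_\bH$ and $X_2 = c_{21}X_\bH + c_{22}Y_\bH$ with $c_{ij}(q)$ uniquely determined. To see $c_{ij}\in C^\infty(V)$, note that $X_\bH\wedge Y_\bH$ is a nowhere-vanishing section of $\Lambda^2\distr$ on $V$, so $c_{11} = (X_1\wedge Y_\bH)/(X_\bH\wedge Y_\bH)$ and similarly for the others — ratios of smooth $2$-vector fields with smooth nonvanishing denominator — hence smooth. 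Finally $\det C = (X_1\wedge X_2)/(X_\bH\wedge Y_\bH)$ is nowhere zero on $V$ because $\{X_1,X_2\}$ is a local orthonormal frame and thus pointwise linearly independent; therefore $C\in C^\infty(V,\operatorname{GL}_2(\R))$ and $C^{-1} = (\det C)^{-1}\operatorname{adj}(C)\in C^\infty(V,\operatorname{GL}_2(\R))$ as well.

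\textbf{Main obstacle.} The only delicate point is the invocation of the contact Darboux normal form and, in particular, pinning down that the normalizing coordinates can be taken so that $\ker\eta$ is \emph{exactly} the span of the Heisenberg fields \eqref{eq:vf} (rather than of some frame differing by the conformal factor relating $\eta$ to the canonical contact form). This is where the regularity/contact hypothesis on $p$ is genuinely used, and it is the reason for citing \cite{Zhitomirskii1995}: the classification there produces the germ of the structure in the stated form. Everything downstream — smoothness and invertibility of $C$ — is routine linear algebra with the wedge-product formulas above.
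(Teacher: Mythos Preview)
Your proposal is correct and is essentially the argument that lies behind the citation: the paper does not prove Proposition~\ref{prop:zito} at all but simply refers to \cite[Sec.~8.2]{Zhitomirskii1995} for the existence of such coordinates, and that reference provides exactly the Darboux-type normal form you invoke. Your concern in the ``main obstacle'' paragraph about the conformal factor is unfounded, since $\ker\eta = \ker(f\eta)$ for any nonvanishing $f$, so the Darboux theorem for contact \emph{distributions} (rather than forms) already delivers the kernel in the required shape; otherwise the linear-algebra conclusion via wedge products is clean and complete.
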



Since, without loss of generality, we can assume that these coordinates cover the whole $U$, we will henceforth identify points in $U$ with their coordinate representation and $p$ with the origin.
In the following, for $\varepsilon>0$  we let $U_\varepsilon$ be the Euclidean ball centered at $0$ of radius $\varepsilon$, 
and $H^2_0(U_\varepsilon)$ be the Sobolev space \eqref{eq:Sobolev} with respect to the sub-Laplacian $\Delta_\omega$ in $M$.

\begin{prop}\label{prop:local-friedrichs}
  It holds that $H^2_0(U_\varepsilon)=H^2_0(U_\varepsilon\setminus\{p\})$.
\end{prop}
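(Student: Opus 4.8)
The plan is to reduce Proposition~\ref{prop:local-friedrichs} to the Heisenberg result Theorem~\ref{thm:heis-delta-local} by showing that, on a sufficiently small Euclidean ball $U_\varepsilon$ around $p=0$, the second-order horizontal Sobolev norm built from $\Delta_\omega = X_1^2+X_2^2+X_0$ is equivalent to the one built from the Heisenberg sub-Laplacian $\Delta_\H = X_\H^2+Y_\H^2$. Once this equivalence of norms is established, $H^2_0(U_\varepsilon)$ (resp. $H^2_0(U_\varepsilon\setminus\{p\})$) computed with $\Delta_\omega$ coincides as a set, with equivalent norms, with the analogous Heisenberg space, and Theorem~\ref{thm:heis-delta-local} (applied with $\Omega=U_\varepsilon$) gives $H^2_0(U_\varepsilon)=H^2_0(U_\varepsilon\setminus\{p\})$ for the Heisenberg structure, hence for $\Delta_\omega$.

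First I would use Proposition~\ref{prop:zito} to write $X_1,X_2$ as a smooth invertible linear combination of $X_\H,Y_\H$ on $V\supset U$, so that $\spn\{X_1,X_2\}=\spn\{X_\H,Y_\H\}$ pointwise and the sub-Riemannian gradients satisfy $|\nabla_\omega u|\asymp |\nabla_\H u|$ with constants controlled by $\|C\|_\infty,\|C^{-1}\|_\infty$ on $U_\varepsilon$. Next, expanding $\Delta_\omega u = \sum_{i}X_i^2 u + X_0 u$ in terms of $X_\H,Y_\H$ and their first derivatives, one gets $\Delta_\omega u = \sum_{j,k} a_{jk}\, Z_j Z_k u + \sum_j b_j\, Z_j u$ with $Z_1=X_\H$, $Z_2=Y_\H$, where $a_{jk},b_j$ are smooth and $(a_{jk})$ is uniformly invertible near $0$; similarly $\Delta_\H u$ is expressible through the $X_iX_j u$. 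The key analytic input is Proposition~\ref{prop:sub-ell}: for $\varepsilon$ small it controls $\sum_{i,j}\|X_iX_j u\|_{L^2(U_\varepsilon)}$ (and, symmetrically, $\sum_{j,k}\|Z_jZ_k u\|_{L^2(U_\varepsilon)}$) by the corresponding $H^2$ norm. Shrinking $\varepsilon$ if necessary and absorbing the lower-order terms $\nabla_\omega u$, $\nabla_\H u$ via \eqref{eq:est_grad} and the gradient equivalence above, one obtains
\begin{equation}
  C^{-1}\|u\|_{H^2(U_\varepsilon)}^2 \le \|u\|_{L^2(U_\varepsilon)}^2+\|\Delta_\H u\|_{L^2(U_\varepsilon)}^2 \le C\|u\|_{H^2(U_\varepsilon)}^2,\qquad u\in C^\infty_0(U_\varepsilon),
\end{equation}
i.e. the two graph norms on $C^\infty_0(U_\varepsilon)$ are equivalent. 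Taking closures, the Sobolev spaces coincide; the same argument applied verbatim on $C^\infty_0(U_\varepsilon\setminus\{p\})$ shows the pointed spaces coincide as well. Then Theorem~\ref{thm:heis-delta-local} closes the argument.

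The main obstacle I anticipate is making the norm equivalence uniform down to a common small radius $\varepsilon$: the subelliptic estimate of Proposition~\ref{prop:sub-ell} only guarantees control on some smaller neighborhood $V\subset U$ depending on the operator, so one must check that a single $\varepsilon$ works simultaneously for $\Delta_\omega$ and $\Delta_\H$ (this is fine since there are only two operators, but it requires intersecting the two neighborhoods and re-running Lemma~\ref{lem:union-sobolev}-type splittings as in \eqref{eq:omega-split}). A secondary technical point is bookkeeping the first-order commutator terms produced when converting $X_iX_j$ into $Z_kZ_l$ plus lower order: these are handled by Young's inequality exactly as in \eqref{eq:est_grad}, absorbing small multiples of $\|\Delta_\H u\|_{L^2}$ or $\|\Delta_\omega u\|_{L^2}$ into the left-hand side, but one has to be careful that the absorbed constants stay below $1$ after shrinking $\varepsilon$.
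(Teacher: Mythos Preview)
Your proposal is correct and follows essentially the same approach as the paper: pass to the coordinates of Proposition~\ref{prop:zito}, use Proposition~\ref{prop:sub-ell} in both directions to show that the $H^2$ graph norms built from $\Delta_\omega$ and $\Delta_\H$ are equivalent on $C^\infty_0(U_\varepsilon)$, and then invoke Theorem~\ref{thm:heis-delta-local}. The paper adds one small step you left implicit, namely the equivalence of the underlying $L^2$ norms via $\varpi^{-1}\le \omega\le\varpi$ on $U_\varepsilon$; your ``single $\varepsilon$'' concern is handled exactly as you suggest, by intersecting the two neighborhoods $V$ produced by Proposition~\ref{prop:sub-ell} for the two operators and taking $U_\varepsilon$ inside that intersection.
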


\begin{proof}
  Let us consider the coordinates given by Proposition~\ref{prop:zito}. We denote by $H^2_0(U_\varepsilon,\H^1)$ and $H^2_0(U_\varepsilon\setminus\{p\},\H^1)$ the Sobolev spaces associated with the Heisenberg sub-Laplacian $\Delta_\H=X_\H^2+Y_\H^2$. 
In particular, by Theorem~\ref{thm:heis-delta-local}, $H^2_0(U_\varepsilon,\H^1)=H^2_0(U_\varepsilon\setminus\{p\},\H^1)$. Thus, in order to prove the statement, it suffices to show that the $H^2$ norm w.r.t.\ the sub-Laplacian $\Delta_\omega$ and the measure $\omega$ is equivalent to the one w.r.t. $\Delta_\H$ and the Lebesgue measure.
  
  By smoothness of $\omega$ there exists $\varpi>0$ such that, letting $d\omega = \omega(q)dq$, we have
  \begin{equation}\label{eq:omega-bdd}
    \frac{1}{\varpi} \le \omega(q)\le \varpi, \qquad \forall q\in U_\varepsilon.
  \end{equation}
  In particular, this implies that the $L^2$ norms on $U_\varepsilon$ w.r.t.\ $\omega$ and the Lebesgue measure are equivalent.
  Moreover, by Proposition~\ref{prop:zito}, there exists smooth functions $\alpha_{ij}$ and $\beta_i$, $i,j=1,2$, such that
  \begin{equation}
    \Delta_\H = \sum_{i,j=1}^2 \alpha_{ij}X_iX_j + \sum_{i=1}^2 \beta_i X_i.
  \end{equation}
  Let $c>0$ be such that $|\alpha_{ij}|,|\beta_i|\le c$ on $U_\varepsilon$ for $i,j=1,2$.
  By \eqref{eq:omega-bdd} and Proposition~\ref{prop:sub-ell} with $\Omega=U=U_\varepsilon$, we then have that there exists a constant $C>0$ such that, for any $u\in C^\infty_0(U_\varepsilon)$, it holds
  \begin{equation}
    \begin{split}
    \|\Delta_\H u\|^2_{L^2(U_\varepsilon,\H^1)} 
    &\le \varpi \int_{U_\varepsilon} |\Delta_\H u|^2\,d\omega\\
    &\le c\varpi\left( \sum_{i,j=1}^2\|X_iX_ju\|_{L^2(U_\varepsilon)}^2 +\|\nabla u\|^2_{L^2(U_\varepsilon)} \right)\\
    &\le C \|u\|_{H^2(U_\varepsilon)}.
    \end{split}
  \end{equation}
  The same argument can be used to show that $\|\Delta_\omega u\|_{L^2(U_\varepsilon)}\lesssim \|u\|_{H^2(U_\varepsilon,\H^1)}$, completing the proof of the statement.
\end{proof}

Thanks to the above we are now in a position to complete the proof of the main theorem.

\begin{proof}[Proof of Theorem~\ref{thm:3d-delta}]
  By Proposition~\ref{prop:only-closure}, we need to show that $H^2_0(M)=H^2_0(M\setminus\{p\})$.
  Let $\varepsilon>0$ sufficiently small, so that Proposition~\ref{prop:local-friedrichs} implies that $H^2_0(U_\varepsilon)=H^2_0(U_\varepsilon\setminus\{p\})$. 
  By Lemma~\ref{lem:union-sobolev}, we then have
  \begin{multline*}
      H^2_0(M) 
      = H^2_0(U_\varepsilon)+H^2_0(M\setminus U_{\varepsilon/2})\\
      = H^2_0(U_\varepsilon\setminus\{p\})+H^2_0(M\setminus U_{\varepsilon/2})
      =H^2_0(M\setminus\{p\}).\qedhere
  \end{multline*}
\end{proof}

\appendix
\section{Hardy constant in the Heisenberg group}\label{sec:hardy}

In the Riemannian setting, the essential self-adjointness for $n\ge 4$ follows from the validity of local Hardy-type inequalities, with constant $C_H\ge 1$. Indeed, via normal coordinates, one can show that for every $u\in C_0^\infty(M\setminus\{p\})$ it holds
\begin{equation}
\label{eq:hardy_riem_intro}
\int_M|\nabla_{R}u|\;d\omega\geq
C_H\int_{\{ \delta_{R}<\eta\}}\left(\frac{1}{\delta_{R}^2}-\frac{k}{\delta_{R}}\right)u^2\;d\omega+c\|u\|_{L^2(M)},
\quad
C_H=\left( \frac{n-2}{2}\right)^2,
\end{equation}
for some constants $\eta>0, k\leq 1/\eta, c\in\R$, see \cite{PRS}. Here, $\delta_{R}(q)=d_{R}(q,p)$ is the Riemannian distance from $p\in M$.
By Agmon type estimates, this yields at once the essential self-adjointness for $n\geq 4$ as presented in \cite{Nenciu2008, PRS, FPR}. See \cite[Remark~4.2]{FPR} for a comment on the necessity of the condition $C_H\ge 1$ in order to use this approach. 

In this appendix, we show that Hardy-type inequalities as the above with constant  $C_H\ge 1$ do not hold for $3$-dimensional sub-Riemannian manifolds. In particular,  the Hardy constant in the $3$-dimensional Heisenberg group $\H^1$ is strictly less than $1$, see~\cite{FPHardy} for a discussion on Hardy inequalities in the $(2n+1)$-Heisenberg group. 

In the following, we denote the distance from the origin in $\H^1$ as $\delta(p):=d(p,0)$.
One can check that $\delta$ is smooth on $\R^3\setminus\{x=y=0\}$ and satisfies 
\begin{equation}\label{eq:grad-dist}
|\nH\delta|=1 \quad \text{a.e. on }\H^1.
\end{equation}
Here with abuse of notation $|\cdot|$ denotes the sub-Riemannian norm of horizontal vector fields as defined in \eqref{eq:sR-norm}, where $g=g_\H$ is defined in section \ref{s:prelHeis}.
Observe that, by \cite{Lehrback2017}, there exists $C>0$ such that 
\begin{equation}\label{eq:hardy}
  \int_{\bH^1}|\nabla_\bH u|^2\,dp \ge C \int_{\bH^1}\frac{|u|^2}{\delta^2}\,dp, \qquad \forall u\in C^\infty_0(\bH^1\setminus\{0\}).
\end{equation}
In the sequel we prove the following fact on the sharp constant in the above,  contradicting the result claimed in \cite{Yang2013}.

\begin{thm}\label{thm:hardy}
  We have
  \begin{equation}\label{eq:hardy-const}
    C_H = \inf_{u\in C^\infty_0(\R^3\setminus\{0\})} \frac{\int_{\bH^1}|\nabla_\bH u|^2\,dp}{\int_{\bH^1}\frac{|u|^2}{\delta^2}\,dp}<1.
  \end{equation}
\end{thm}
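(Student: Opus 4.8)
Throughout, write $\delta(p)=d(p,0)$ for the Carnot--Carathéodory distance to the origin, recall $|\nabla_\H\delta|=1$ a.e., and note first that \emph{radial} competitors never beat $1$: using the polar-type decomposition $dp=r^{3}\,dr\,d\sigma$ on the $\delta$-spheres, for $u=\phi(\delta)$ the quotient becomes $\big(\int_0^\infty\phi'(r)^2 r^{3}\,dr\big)\big/\big(\int_0^\infty\phi(r)^2 r\,dr\big)$, which by the one-dimensional Hardy inequality is $\ge\big(\tfrac{Q-2}{2}\big)^{2}=1$ with $Q=4$, and the value $1$ is optimal among radial functions. Hence any improvement must come from genuinely non-radial test functions, and the mechanism we exploit is that $\delta$ fails to be ``$\Delta_\H$-harmonic at the rate of a radial function in homogeneous dimension $4$''.

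\smallskip
\textbf{Step 1 (ground state substitution).} Set $u=\delta^{-1}v$. Using $|\nabla_\H\delta|^2=1$ a.e.\ and one integration by parts one obtains, for $v$ smooth and compactly supported in $\H^1\setminus\{0\}$,
\[
\int_{\H^1}|\nabla_\H u|^2\,dp-\int_{\H^1}\frac{|u|^2}{\delta^2}\,dp
=\int_{\H^1}\delta^{-2}|\nabla_\H v|^2\,dp+\Big\langle \Delta_\H\delta-\tfrac{3}{\delta},\,\delta^{-3}v^{2}\Big\rangle,
\]
the last term being the honest integral $\int_{\H^1} v^{2}\delta^{-4}\big(\delta\,\Delta_\H\delta-3\big)\,dp$ whenever $\operatorname{supp}v$ avoids the $z$-axis, where $\delta$ is not smooth (if $\operatorname{supp}v$ meets the axis, the singular part of $\Delta_\H\delta$ contributes as well). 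Since for such $v$ the function $u=\delta^{-1}v$ already lies in $C_0^\infty(\R^{3}\setminus\{0\})$, it suffices to produce a single $v$ for which the right-hand side above is negative, i.e.\ $\int\delta^{-2}|\nabla_\H v|^2<\int v^{2}\delta^{-4}\big(3-\delta\,\Delta_\H\delta\big)$.

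\smallskip
\textbf{Step 2 (the distance is not $\Delta_\H$-harmonic at rate $Q$).} Put $g:=\delta\,\Delta_\H\delta$, a $0$-homogeneous function, smooth off the $z$-axis. A direct computation gives $\Delta_\H(\delta^{-2})=-2\,\delta^{-4}(g-3)$ off the axis, so if $g\equiv3$ then $\delta^{-2}$ would be a positive, $(-2)$-homogeneous $\Delta_\H$-harmonic function on $\H^1\setminus(\{0\}\cup\{x=y=0\})$; since the $z$-axis is $\Delta_\H$-negligible (zero horizontal $2$-capacity), $\delta^{-2}$ would then be a multiple of the fundamental solution $\Gamma=(8\pi)^{-1}N^{-2}$, forcing $\delta=cN$. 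This contradicts the well-known fact that the Carnot--Carathéodory sphere is not a Korányi sphere (for instance $N$ is smooth on $\{x=y=0,\ z\neq0\}$ while $\delta$ is not). Hence $g\not\equiv3$. Moreover, applying the horizontal divergence theorem to $\nabla_\H\delta$ on the unit ball, together with the scaling identities $\mathrm{Per}_\H(\partial B_1)=4\,\mathcal L^{3}(B_1)=|S|$ (from the coarea formula and $\mathrm{Per}_\H(\partial B_r)=r^{3}\mathrm{Per}_\H(\partial B_1)$), gives $\int_S g\,d\sigma=3|S|$ up to the contribution of the singular part of $\Delta_\H\delta$ on the axis, which has a definite sign.

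\smallskip
\textbf{Step 3 (reduction to an angular problem and conclusion).} Work in polar coordinates $p\leftrightarrow(r,\theta)$ adapted to $\delta$, with $dp=r^{3}\,dr\,d\sigma(\theta)$, and test with $v=\phi(\delta)\,\chi(\theta)$ where $\chi$ depends only on the direction $\theta$ (smooth, supported away from the two poles of $S$) and $\phi$ is a long logarithmic cutoff, $\phi\equiv1$ on $[1,R]$, cut smoothly to $0$ outside $[\tfrac12,2R]$. Both $\int v^{2}\delta^{-4}(3-g)$ and $\int\delta^{-2}|\nabla_\H v|^2$ then have leading order $(\log R)$ times, respectively, $\int_S\chi^{2}(3-g)\,d\sigma$ and $c_0\int_S|\nabla_S\chi|^{2}\,d\sigma$ for a fixed constant $c_0>0$ encoding the angular part of the metric, all other pieces (the radial derivative of $\phi$, the radial/angular cross term, the two endpoint shells) being $O(1)$, so that
\[
\frac{1}{\log R}\left(\int_{\H^1}|\nabla_\H u|^2\,dp-\int_{\H^1}\frac{|u|^2}{\delta^{2}}\,dp\right)\ \longrightarrow\ \mathcal Q(\chi):=c_0\!\int_S|\nabla_S\chi|^{2}\,d\sigma+\int_S\chi^{2}(g-3)\,d\sigma,\qquad R\to\infty.
\]
It remains to make $\mathcal Q$ negative. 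The constant profile gives $\mathcal Q(1)=\int_S(g-3)\,d\sigma$; if this is $\le0$ it suffices to observe that, since $g\not\equiv3$, $\chi\equiv1$ is not a critical point of $\mathcal Q$, hence the bottom of the spectrum of $-c_0\Delta_S+(g-3)$ on $S$ is strictly negative and any ground state $\chi$ satisfies $\mathcal Q(\chi)<0$; if instead $\int_S(g-3)\,d\sigma>0$ then the singular part of $\Delta_\H\delta$ on the axis is strictly negative, and one obtains the gain directly by placing $\operatorname{supp}v$ in a tube around the axis so that this negative measure enters the identity of Step 1. In either case, choosing $R$ large (and mollifying if necessary) yields $u\in C_0^\infty(\R^{3}\setminus\{0\})$ with Rayleigh quotient $<1$, proving $C_H<1$.

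\smallskip
The main obstacle I expect is twofold and lies in Steps 2--3: proving rigorously that $\delta\,\Delta_\H\delta\not\equiv3$ — equivalently, controlling the removability of the $z$-axis for $\Delta_\H$-harmonic functions, or, what amounts to the same, pinning down the sign and size of the singular part of $\Delta_\H\delta$ — and then carrying out the polar decomposition carefully enough to identify the limiting angular form $\mathcal Q$, including the precise metric on $S$, the constant $c_0$, and the $O(1)$ remainders.
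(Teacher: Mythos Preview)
Your approach is genuinely different from the paper's and, as you yourself flag, it is incomplete at the two crucial junctures.

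\textbf{What the paper does.} The paper never touches the ground-state substitution or the quantity $g=\delta\,\Delta_\H\delta$. It works in explicit ``polar'' coordinates $(t,\theta,r)$ on $\H^1\setminus\{x=y=0\}$, in which $\delta=t$ and the measure is $t^{3}\mu(r)\,dt\,d\theta\,dr$, and tests directly with (cut-off) powers of the Kor\'anyi norm, $u_\alpha\sim N^{\alpha/2}$. Two explicit formulas, \eqref{eq:k-pot} and \eqref{eq:k-grad}, give
\[
\frac{|N^{\alpha/2}|^2}{\delta^2}=t^{\alpha-2}\gamma_\alpha(r),
\qquad
|\nabla_\H(N^{\alpha/2})|^2=\tfrac{\alpha^2}{4}\,t^{\alpha-2}\gamma_\alpha(r)\,\eta(r),
\]
with a universal angular factor $\eta$ satisfying $0\le\eta\le1$, $\eta(0)=1$, $\eta(\pm2\pi)=0$, $\eta$ strictly decreasing in $|r|$. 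Letting $\alpha\uparrow-2$ (so the radial integrals blow up at the same rate and the compact cut-off becomes negligible) yields
\[
C_H\le \frac{\int_{-2\pi}^{2\pi}\gamma_{-2}\,\eta\,\mu\,dr}{\int_{-2\pi}^{2\pi}\gamma_{-2}\,\mu\,dr}<1,
\]
the strict inequality being immediate from $\eta\le1$, $\eta\not\equiv1$ on the support of $\gamma_{-2}\mu$. No capacity, removability, or spectral argument is needed.

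\textbf{Where your argument has real gaps.} In Step~2, the implication ``$\delta^{-2}$ is $\Delta_\H$-harmonic off the axis $\Rightarrow$ $\delta^{-2}=cN^{-2}$'' requires two nontrivial facts you do not prove: (i) a removable-singularity theorem across the $z$-axis for $\Delta_\H$-harmonic functions, and (ii) a Liouville-type uniqueness for positive $(-2)$-homogeneous $\Delta_\H$-harmonic functions on $\H^1\setminus\{0\}$. For (i), the $z$-axis has Carnot--Carath\'eodory Hausdorff dimension $2$ in a space of homogeneous dimension $4$, which is exactly the borderline case for $2$-capacity; ``zero horizontal $2$-capacity'' is asserted, not shown, and is in fact delicate. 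For (ii), you would need to rule out other homogeneous solutions, which is a classification problem in its own right.

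In Step~3, the functional $\mathcal Q(\chi)=c_0\int_S|\nabla_S\chi|^2+\int_S\chi^2(g-3)$ is not well-defined as written: in the paper's coordinates the tangential operator $(\nabla_\H\delta)^\perp$ has $r$-dependent coefficients, so there is no single constant $c_0$ and no flat ``angular metric'' on $S$; the angular Dirichlet form is a weighted, variable-coefficient one. Moreover, your dichotomy collapses in the most likely case: a formal divergence-theorem computation (your own, in Step~2) gives $\int_S(g-3)\,d\sigma=0$ for the absolutely continuous part, so you are in the degenerate case $\mathcal Q(1)=0$; but $\chi\equiv1$ is not an admissible competitor (you required $\chi$ to avoid the poles of $S$), and you never control what the approximation of $1$ by pole-avoiding $\chi$ does to $\mathcal Q$. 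The alternative branch, where you invoke ``the singular part of $\Delta_\H\delta$'' by moving $\operatorname{supp}v$ into a tube around the axis, is a heuristic without a mechanism: you do not identify that singular part, its sign, or how it enters the Rayleigh quotient quantitatively.

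\textbf{Comparison.} Your route is more conceptual and, if it could be completed, would explain \emph{why} $C_H<1$ (the CC distance is not the fundamental-solution gauge). But the paper's explicit test with $N^{\alpha/2}$ is short, elementary, and yields a numerical bound ($C_H\lesssim0.798$). Given the difficulties above, the direct computation is the efficient path here; if you want to pursue your approach, the cleanest fix for Step~2 is to abandon the abstract uniqueness argument and simply compute $g=\delta\,\Delta_\H\delta$ in the coordinates $(t,\theta,r)$ of \eqref{eq:tilde-phi}, which immediately shows $g\not\equiv3$ and, in fact, hands you the explicit weight needed to make Step~3 precise.
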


\begin{rem}
  Numerical computations on the explicit function used in the proof of Theorem~\ref{thm:hardy} yield $C_H\le 0.798$.
\end{rem}

\begin{rem}
  In \cite{Garofalo1990}, an inequality similar to \eqref{eq:hardy} is investigated, where the distance from the origin is replaced by the Koranyi norm $N$, defined in \eqref{eq:korany}. In particular, they obtain
  \begin{equation}
    \int_{\bH^1}|\nabla_\bH u|^2\,dp \ge \int_{\bH^1}|u|^2\frac{|\nabla_\H N|^2}{N^2}\,dp, \qquad \forall u\in C^\infty_0(\bH^1\setminus\{0\}).
  \end{equation}
  Here, the constant is equal to $1$ and is sharp. Unfortunately, since the level sets of $|\nabla_\H N|/N$ are not neighborhoods of the origin, this inequality cannot be paired with the Agmon-type estimates techniques of \cite{FPR,PRS} in order to yield the essential self-adjointness result.
\end{rem}

\subsection{A set of coordinates in $\H^1$}

%
%
%

We define the diffeomorphism $\Phi:\R_+\times \mathbb S^1\times(-2\pi,2\pi)\to \H^1\setminus\{x=y=0\}$ given by
\begin{equation}\label{eq:tilde-phi}
\Phi(t,\theta,r) = 
\left(
\begin{array}{c}
t \frac{\sin(\theta+r)-\sin\theta}r\\
t \frac{-\cos(\theta+r)+\cos\theta}r\\
t^2 \frac{r-\sin r}{2r^2}
\end{array}
\right).
\end{equation}
Observe that $\Phi(t,\theta,r)=\varrho_t\circ\Phi(1,\theta,r)$, where $\varrho_t$ denotes the anisotropic dilation introduced in \eqref{eq:dilation}. 
Moreover, direct computations show that $\Phi^*\mathcal L^3 = t^3\mu(r)dt\,d\theta\,dr$, where
\begin{equation}
\mu(r) = \frac{2-2\cos r-r\sin r}{r^{4}}.
\end{equation}

{Let $\theta_0\in \mathbb S^1$, $h_0\in \R$ and $t\in [0,2\pi/|\lambda_z|]$. Then, the curve $t\mapsto \Phi(t,\theta_0,t h_0)$ is an arc-parametrized length-minimizing curve issuing from $0$.} (See, \cite[Section~4.4.3]{ABB}.) In particular, this implies that $\delta(\Phi(t,\theta,r))=t$, for any $t>0$, $\theta\in\mathbb S^1$ and $r\in[-2\pi,2\pi]$.


One can check that  $\nH \delta = \cos(\theta+r)X_\H+\sin(\theta+r)Y_\H$. Then, we let $(\nH\delta)^\perp = -\sin(\theta+r)X_\H + \cos(\theta+r)Y_\H$ be a choice of horizontal unit vector orthogonal to $\nH\delta$. By \eqref{eq:grad-dist}, $\nabla_\H\delta$ and $(\nabla_\H\delta)^\perp$ form an orthonormal basis of horizontal vector fields.
Straightforward computations then show that, in the coordinates given by $\Phi$, we have the following: 
\begin{gather}
\label{eq:dist}
\nabla_\H\delta = \partial_t + \frac rt\partial_r, \\
(\nabla_\H\delta)^\perp = \frac rt\frac{r-\sin (r)}{r \sin (r)+2 \cos (r)-2} \partial_\theta+ \frac rt w(r) \partial_r.
\end{gather}
Here, we let
\begin{equation}\label{eq:ww}
w(r)=\frac{r}{2-r \cot \left(\frac{r}{2}\right)},\qquad r\in(-2\pi,2\pi).
\end{equation}

\subsection{Preliminary computations on the Koranyi norm}
Recall that the Koranyi norm \eqref{eq:korany} is
\begin{equation}
	N(x,y,z)=({(x^2+y^2)^2+16z^2})^{1/4}.
\end{equation}
In particular, we have
\begin{equation}\label{eq:NPhi}
	N\circ\Phi(t,\theta,r)=\frac{\sqrt{2}t}{r}\sqrt[4]{r^2-2 r \sin (r)-2 \cos
   (r)+2}.
\end{equation}
With a little abuse of notation we still denote by $N$ the Korany norm in the coordinates $\Phi$.
Since $\delta(\Phi(t,\cdot,\cdot))=t$, $t>0$, for any $\alpha\in\bR$ this yields, 
\begin{equation}\label{eq:k-pot}
	\frac{|N^{\alpha/2}|^2}{\delta^2} =
2^{\alpha /2}t^{\alpha-2}\left(\frac{\sqrt[4]{r^2-2 r \sin
   (r)-2 \cos (r)+2}}{r}\right)^{\alpha } = t^{\alpha-2}\gamma_{\alpha}(r).
\end{equation}
Here, $\gamma_{\alpha}$ is defined by the last equality. It is simple to check that $\gamma_\alpha(r)\mu(r)$ is a non-negative and bounded continuous function on $[-2\pi,2\pi]$, whose maximum is $1/12$ at $r=0$ and whose minimum is $0$ at $r=\pm 2\pi$.
In particular, the above is integrable in $t^3\mu(r)\,dt\,dr$ for $t\to+\infty$ only if $\alpha<-2$.

Using the expression of $\nabla_\H\delta$ and $(\nabla_\H\delta)^\perp$, and the fact that they form an orthonormal frame outside $t=0$, we then get
\begin{equation}\label{eq:gradNPhi}
  |\nabla_\H N(t,\theta,r)|^2=\frac{1-\cos (r)}{\sqrt{r^2-2 r \sin (r)-2 \cos (r)+2}}
\end{equation}
In particular, for any $\alpha\in\bR$ we have
\begin{equation}\label{eq:k-grad}
	\begin{split}
	|\nabla_\H(N^{\alpha/2})|^2 
&=\frac{\alpha^2}{4}\frac{r^2 (1-\cos r)}{2 \left(r^2-2 r \sin (r)-2 \cos
   (r)+2\right)} \frac{|N^{\alpha/2}|^2}{\delta^2}\\
&=\frac{\alpha^2}{4}t^{\alpha-2}\gamma_{\alpha}(r)\eta(r).
	\end{split}
\end{equation}
Here, $\eta$ is defined by the last equality, and is independent of $\alpha$.
Observe that, also in this case, the integrability at infinity is true only if $\alpha<-2$.

\subsection{Proof of Theorem~\ref{thm:hardy}}
Let us fix a smooth function $\chi:\mathbb R_+\to [0,1]$ such that $\chi|_{[0,1/2]}\equiv0$ and $\chi|_{[1,+\infty]}\equiv 1$. Then, for $\alpha>-2$, we let
\begin{equation}
  u_\alpha(t,\theta,r) = 
  \begin{cases}
    \chi(t) N^{\alpha/2}(1,\theta,r), &\qquad \text{ if } t\le 1,\\
    N^{\alpha/2}(t,\theta,r), &\qquad \text{ otherwise}.\\
  \end{cases}
\end{equation}
By definition of $\chi$, \eqref{eq:k-pot}, and \eqref{eq:k-grad}, for any $\alpha>-2$ there exists $(v_n)_n\subset C^\infty_0(\H^1)$ such that
\begin{equation}
  \lim_{n\to +\infty}\int_{\H^1} \frac{|v_n|^2}{\delta^2}\,dp = \int_{\H^1} \frac{|u_\alpha|^2}{\delta^2}\,dp ,
  \qquad
  \lim_{n\to +\infty}\int_{\H^1} |\nH v_n|^2\,dp = \int_{\H^1} |\nH u_\alpha|^2\,dp.
\end{equation}
In particular, 
%
\begin{equation}
  C_H \le \inf \left\{ \frac{\int_{\H^1} |\nH u_\alpha|^2\,dp}{\int_{\bH^1} \frac{u_\alpha^2}{\delta^2}} :\: \alpha\in[-3,-2]\right\}.
\end{equation}

Let us estimate the quotient above.
By \eqref{eq:k-pot}, we have
\begin{equation}\label{eq:u-den}
  \int_{\H^1}\frac{|u_\alpha|^2}{\delta^2}\,dp \ge \int_{\delta\ge 1}\frac{|u_\alpha|^2}{\delta^2}\,dp = {2\pi}\int_{1}^{+\infty} t^{\alpha+1}\,dt\int_{-2\pi}^{2\pi}\gamma_\alpha(r)\mu(r)\,dr.
\end{equation}
Observe that the integral in $t$ on the r.h.s.\ goes to $+\infty$ as $\alpha\to -2$.
Moreover, as direct computations show, $N^{\alpha/2}|_{t=1}$ and $\partial_r(N^{\alpha/2})|_{t=1}$ are uniformly bounded from above for $\alpha\in[-2,-3]$. As a consequence, there exists a constant $C>0$ such that $|\nabla_\bH u_\alpha|^2\le C$. In particular, by \eqref{eq:k-grad},
\begin{equation}\label{eq:u-num}
  \int_{\H^1}{|\nH u_\alpha|^2}\,dp \le C \mathcal L^3(\{0\le \delta\le 1\}) + \frac{\alpha^2}4{2\pi} \int_{1}^{+\infty} t^{\alpha+1}\,dt \int_{-2\pi}^{2\pi}\gamma_\alpha(r)\eta(r)\mu(r)\,dr.
\end{equation}
Taking the quotient of \eqref{eq:u-num} and \eqref{eq:u-den}, and passing to the limit as $\alpha\to -2$, we get
\begin{equation}\label{eq:ch}\begin{split}
  C_H 
\le \frac{ \int_{-2\pi}^{2\pi}\gamma_{-2}(r)\eta(r)\mu(r)\,dr}{\int_{-2\pi}^{2\pi}\gamma_{-2}(r)\mu(r)\,dr}.
  \end{split}
\end{equation}
Here, we passed to the limit under the integral signs thanks to monotone convergence.
Simple computations show that $\eta(0)=1$, $\eta(\pm2\pi)=0$, and that $\eta$ is monotone decreasing in $|r|$. Hence, for any $a>0$, it holds
\begin{gather}
\int_{|r|>a}\gamma_{-2}(r)\eta(r)\mu(r)\,dr < \eta(a)\int_{|r|>a}\gamma_{-2}(r)\mu(r)\,dr,\\
\int_{|r|\le a}\gamma_{-2}(r)\eta(r)\mu(r)\,dr \le \int_{|r|\le a}\gamma_{-2}(r)\mu(r)\,dr.
\end{gather}
Summing up, since $\eta(a)<1$ and $\int_{|r|>a}\gamma_{-2}(r)\mu(r)\,dr>0$, by \eqref{eq:ch} we obtain that $C_H<1$.
\qed

\medskip
\noindent \textbf{Acknowledgments.} The authors are grateful to Alessandro Teta for suggesting reference \cite{Pavlov1989}, that led to the  strategy of proof for the essential self-adjointness of the pointed Laplacian on the Heisenberg group. The authors acknowledge that the present research is partially supported by: MIUR Grant Dipartimenti di Eccellenza (2018-2022) E11G18000350001, ANR-15-CE40-0018 project \textit{SRGI - Sub-Riemannian Geometry and Interactions},  ANR-17-CE40-0007 poject \textit{QUACO - Contr\^ole quantique : syst\`emes d'EDPs et applications \`a l'IRM}, G.N.A.M.P.A.\ project \textit{Problemi isoperimetrici in spazi Euclidei e non}. The third author acknowledges the support received from the European Union's Horizon 2020 research and innovation programme under the \emph{Marie Sk\l odowska-Curie grant No 794592}.

\bibliographystyle{abbrv}
\bibliography{biblio}

\end{document}